\numberwithin{equation}{section}
\newcommand{\upcite}[1]{\textsuperscript{\textsuperscript{\cite{#1}}}}
\newtheorem{theorem}{Theorem}[section]
\newtheorem{lemma}[theorem]{Lemma}
\newtheorem{corollary}[theorem]{Corollary}
\theoremstyle{definition}
\newtheorem{definition}[theorem]{Definition}
\theoremstyle{remark}
\newtheorem{remark}[theorem]{Remark}
\numberwithin{equation}{section}
\newenvironment{proof1}{\medskip\noindent{\bf Proof of Theorem \ref{thm1}:}\enspace}{\hfill \qed  \medskip}
\newenvironment{proof2}{\medskip\noindent{\bf Proof of Theorem \ref{thm2}:}\enspace}{\hfill \qed  \medskip}
\newenvironment{proof3}{\medskip\noindent{\bf Proof of Theorem  \ref{thm3}:}\enspace}{\hfill \qed  \medskip}
\newenvironment{proof4}{\medskip\noindent{\bf Proof of Theorem \ref{thm4}:}\enspace}{\hfill \qed  \medskip}
\newenvironment{proofaddthm2}{\medskip\noindent{\bf Proof of Theorem \ref{addthm2}:}\enspace}{\hfill \qed  \medskip}
\begin{document}
\title{\LARGE\bf {Global existence and blow-up of solutions to a class of  non-Newton filtration equations with singular potential and logarithmic nonlinearity}}
\author{Menglan Liao, Zhong Tan\thanks{Corresponding author: Zhong Tan
\newline\hspace*{6mm}{\it Email
address:}~liaoml14@mails.jlu.edu.cn(M. Liao),~ztan85@xmu.edu.cn(Z. Tan)}}
\affil{School of Mathematical Sciences, Xiamen University, Xiamen, Fujian, 361005, China.}
\renewcommand*{\Affilfont}{\small\it}
\date{} \maketitle
\vspace{-20pt}

{\bf Abstract:}
In this paper, a class of  non-Newton filtration equations with singular potential and logarithmic nonlinearity under initial-boundary condition is investigated. Based on potential well method and Hardy-Sobolev inequality, the global existence of solutions is derived when the initial energy $J(u_0)$ is subcritical($J(u_0)<d$), critical($J(u_0)=d$) with $d$ being the mountain-pass level. Finite time blow-up results are obtained as well when the initial energy $J(u_0)$ satisfies specific conditions. Moreover, the upper and lower bounds of the blow-up time are given.

{\bf Keywords:}  Non-Newton filtration equation; Singular potential; Logarithmic nonlinearity; Global existence; Blow-up.

{\bf MSC(2010):} 35K20; 35A01; 35B44.

\thispagestyle{empty}
\section{Introduction}
In this paper, we are concerned with the following initial-boundary problem:
\begin{equation}
\label{1.1}
\begin{cases}
      |x|^{-s}u_{t}-\mathrm{div}(|\nabla u|^{p-2}\nabla u)=|u|^{q-2}u\ln|u|& \text{in}~\Omega \times(0,T),  \\
      u(x,t)=0&\text{on}~\partial\Omega \times(0,T),  \\
      u(x,0)=u_0(x)& \text{for}~x\in\Omega,
\end{cases}
\end{equation}
where the initial value $u_0(x)\in W_0^{1,p}(\Omega)$, $T\in (0,\infty]$ is the maximal existence time of solutions, $\Omega\subset \mathbb{R}^N(N>p)$ is a bounded domain containing the origin $0$ with smooth boundary $\partial\Omega$, $x=(x_1,x_2,\cdots,x_N)\in \mathbb{R}^N$ with $|x|=\sqrt{x_1^2+x_1^2+\cdots+x_N^2}$, and the parameters satisfy
\begin{equation}
\label{1.2}
\quad p\ge 2,\quad 0\le s\le 2,\quad p<q<\frac{Np}{N-p}.
\end{equation}

The volumetric moisture content $\theta(x)$, the macroscopic velocity $\overrightarrow{V}$ and the density of the fluid $u$, under the assumption that  a compressible fluid flows in a homogeneous isotropic rigid porous medium, are governed by the following equation \cite{WZYL2001}:
\[\theta(x)u_t-\mathrm{div}(u\overrightarrow{V})=f(u),\]
where $f(u)$ is the source.  For the non-Newtonian fluid, provided that the fluid investigated is the polytropic gas, one obtains
\begin{equation}
\label{1.3}
\theta(x)u_t-c^\alpha\lambda\mathrm{div}(|\nabla u^m|^{p-2}\nabla u^m)=f(u),
\end{equation}
where $c>0,~m>0,~\lambda>0,~p\ge 2.$ Over past years, many researchers has paid attention to  equation \eqref{1.3}.  For the source $f(u)=u^q$, much work has been obtained. For instance, for $\theta(x)=|x|^{-2}$ and $m=1$, in 2004, Tan  \cite{T2004} considered the existence and asymptotic estimates of global solutions as well as finite time blow-up of local solutions based on the classical Hardy inequality \cite{HLP1934}. Later on, Wang \cite{W2007} extended the results obtained by Tan to $\theta(x)=|x|^{-s}$ with $0\le s\le 2$,  and  proved the existence of global solutions by Hardy-Sobolev inequality \cite{BT2002},  and found two sufficient conditions for blow-up in finite time by the combination of variational methods and classical concavity methods.   For $\theta(x)=|x|^{-s}$ with $0\le s\le 2$ in equation \eqref{1.3},  Zhou \cite{Z2016} discussed the global existence and finite time blow-up of solutions  by potential well method and Hardy-Sobolev inequality when the initial energy $J(u_0)$ is subcritical, i.e. $J(u_0)<d$ with $d$ being the mountain-pass level.  For $J(u_0)\ge d$, Xu and Zhou \cite{XZ2018} discussed the behaviors of the solution by using the potential well method and some differential inequality techniques. Their results, in fact extended previous ones obtained by Hao and Zhou \cite{HZ2017}, where some blow-up conditions with $J(u_0)\ge d$ were obtained for $m=1$, $p=2$ and $\theta(x)=|x|^{-s}$ with $0\le s\le 2$ in equation \eqref{1.3}.   When the source $f(u)$ is logarithmic nonlinearity,  Deng and Zhou \cite{DZ2020}  investigated the following semilinear heat equation
with singular potential and logarithmic nonlinearity
\[|x|^{-s}u_{t}-\Delta u=u\ln|u|\]
under an appropriate initial-boundary value condition. They did make full use of the logarithmic Sobolev inequality in \cite{G1975,DD2002} to handle the difficulty caused by the logarithmic nonlinear term $u\ln|u|$. Taking the combination of a family of potential wells, the existence of global solutions and infinite time blow-up solutions were obtained.

Inspired by the results mentioned above, it is pretty natural to discuss what will happen if one replaces $f (u)$ in equation \eqref{1.3} by $|u|^{q-2}u\ln|u|$, and makes $m=1$ and $\theta(x)=|x|^{-s}$ with $0\le s\le 2$, i.e. problem \eqref{1.1}?  Noticing that if one wishes to use the logarithmic Sobolev inequality, the diffusion term $\mathrm{div}(|\nabla u|^{p-2}\nabla u)$ and the logarithmic nonlinearity $|u|^{q-2}u\ln|u|$ must have the same growth oder, i.e. $p=q$. In this paper, we obviously cannot use the logarithmic Sobolev inequality due to $p<q$. In oder to handle the logarithmic nonlinearity, we will use some pivotal lemmas proposed in \cite{DZ2019}. To deal with the difficulty caused by $|x|^{-s}$, we have to apply Hardy-Sobolev inequality. The main purpose of this paper is to discuss the global existence and finite time blow-up of solutions to problem \eqref{1.1}. Firstly, by  potential well method which was introduced by Payne and Sattinger  \cite{PS1975}, and by combining Hardy-Sobolev inequality, the global existence of solutions is derived when the initial energy $J(u_0)\le d$.  In particular,  the asymptotic behavior of solutions is presented. Secondly, some blow-up results are shown. To be more precise, we prove that the solution blows up in finite time supposed that one of the following three assumptions holds: 
\begin{enumerate}[$(1)$]
  \item the initial energy $J(u_0)<0$;
  \item the initial energy $J(u_0)\le M\le d$ and $I(u_0)<0$, $M$ is defined in \eqref{20g};
  \item  the initial energy $0<J(u_0)<\frac{1}{C_2}\Big\||x|^{-\frac s2}u_0(x)\Big\|_2^2-\frac{C_1}{C_2}$, with  $C_1=\frac{\widetilde{C}}{2}$ and  $C_2=\frac {pq}{q-p}\frac{\widetilde{C}}{2}$, here $\widetilde{C}$ is shown in \eqref{20h}. 
\end{enumerate}
And we give the upper bound of the blow-up time under each assumption. Finally, by using the interpolation inequality and Sobolev embedding theorem, the lower bound of the blow-up time is derived as well under the assumption (3).

The rest of this paper is organized as follows. In Section 2, the main results of this paper are stated. In Section 3, we give some preliminaries including notations and lemmas. Proofs of  the main results are given in Section 4.

\section{Main results}
Throughout this paper, $C$ is the given constant of Hardy-Sobolev inequality in Lemma \ref{lem2.5}. We denote by $\|\cdot\|_p$ and $\|\nabla (\cdot)\|_p$ the norm on $L^p(\Omega)$ and $W_0^{1,p}(\Omega)$, respectively. And denote by $(\cdot,\cdot)$ the inner product in $L^2(\Omega)$.  In order to present our main results, let us begin with introducing some notations, definitions.

 For any $u\in W_0^{1,p}(\Omega)$, define the energy functional $J$ and the Nehari functional $I$ as follows:
\begin{equation}
\label{19add3}
J(u)=\frac 1p\|\nabla u\|_p^p-\frac 1q\int_\Omega |u|^q\ln|u|dx+\frac {1}{q^2}\|u\|_q^q,
\end{equation}
\begin{equation}
\label{19add4}
I(u)=\|\nabla u\|_p^p-\int_\Omega |u|^q\ln|u|dx.
\end{equation}
Clearly,  the functionals $J$ and $I$ are well-defined and continuous on $W_0^{1,p}(\Omega)$, and 
\begin{equation}
\label{19add5}
J(u)=\frac 1qI(u)+\frac {q-p}{pq}\|\nabla u\|_p^p+\frac {1}{q^2}\|u\|_q^q.
\end{equation}

Define 
\begin{equation}
\label{19add6}
d=\inf_{u\in \mathcal{N}}J(u),
\end{equation}
where $\mathcal{N}:=\{u\in W_0^{1,p}(\Omega)\backslash\{0\}|I(u)=0\}$ is the Nehari manifold. In Lemma \ref{2lem20}, we obtain
\begin{equation}
\label{20g}
d\ge M:=\frac{q-p}{pq}r_*^p.
\end{equation}
And define the potential well $\mathcal{W}$ and its corresponding set $\mathcal{V}$ by
\[\mathcal{W}:=\{u\in W_0^{1,p}(\Omega)|I(u)>0,~J(u)<d\}\cup \{0\},\]
\begin{equation}
\label{20a}
\mathcal{V}:=\{u\in W_0^{1,p}(\Omega)|I(u)<0,~J(u)<d\}.
\end{equation}

\begin{definition}\label{def2.1}
$($Weak solution$)$ A  function $u:=u(x,t)\in L^\infty(0,T;W_0^{1,p}(\Omega))$  with $|x|^{-\frac s2}u_t\in L^2(0,T;L^2(\Omega))$ is called a weak solution of problem \eqref{1.1} on $\Omega\times [0,T)$ if $u(x,0)=u_0(x)$ in $W_0^{1,p}(\Omega)$ and 
\begin{equation}
\label{19add1}
(|x|^{-s}u_t,\phi)+(|\nabla u|^{p-2}\nabla u,\nabla\phi)=(|u|^{q-2}u\ln|u|,\phi)\quad\text{ a.e. }t\in (0,T)
\end{equation}
 for any $\phi\in W_0^{1,p}(\Omega)$. Moreover, 
 \begin{equation}
\label{19add2}
\int_0^t\Big\||x|^{-\frac s2}u_\tau\Big\|_2^2d\tau+J(u(x,t))=J(u_0)\quad\text{ a.e. }t\in (0,T).
\end{equation}
\end{definition}

\begin{definition}\label{def2.2}
$($Finite time blow-up$)$ Let $u$ be a weak solution of problem \eqref{1.1}. We say that $u$ blows up at some finite time $T$ if  
\[\lim_{t\to T^-}\Big\||x|^{-\frac s2}u\Big\|_2^2=+\infty.\]
\end{definition}

Global existence of solutions is presented as follows: 
\begin{theorem}\label{thm1}
Let \eqref{1.2} hold and $u_0(x)\in W_0^{1,p}(\Omega)$. Assume that $J(u_0)<d$ and $I(u_0)>0$ , then  problem \eqref{1.1} admits a global solution $u\in L^\infty(0,\infty;W_0^{1,p}(\Omega))$  with $|x|^{-\frac s2}u_t\in L^2(0,\infty;L^2(\Omega))$, and $u(t)\in \mathcal{W}$ for $0 \le t < \infty$. Moreover, if $J(u_0)<d(\alpha):=\frac{q-p}{pq}(r(\alpha))^p\le d$, then 
\begin{equation*}
\Big\||x|^{-\frac s2}u\Big\|_2^2\le
\begin{cases}
 \Big\||x|^{-\frac s2}u_0\Big\|_2^2e^{-\frac{2}{\widetilde{C}}\Big[1-\Big(\frac{2q}{q-2}\frac{1}{(r(\alpha))^2}J(u_0)\Big)^{\frac{q+\alpha-2}{2}}\Big]t}&\text{for }p=2,\\
\Big\{\Big(\frac p2-1\Big)\frac{2}{\widetilde{C}^{\frac p2}}\Big[1-\Big(\frac{pq}{q-p}\frac{1}{(r(\alpha))^p}J(u_0)\Big)^{\frac{q+\alpha-p}{p}}\Big]t+\Big(\Big\||x|^{-\frac s2}u_0\Big\|_2^2\Big)^{1-\frac p2}\Big\}^{\frac{2}{2-p}}&\text{for }p>2,
 \end{cases}
\end{equation*}
where $\alpha$ satisfies \eqref{20e},  $r(\alpha)$ and $\widetilde{C}$ are defined in \eqref{20f} and \eqref{20h}, respectively.
\end{theorem}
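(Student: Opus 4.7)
The strategy splits naturally into two parts: (i) constructing a global weak solution via Faedo--Galerkin approximation together with the invariance of the potential well $\mathcal{W}$, and (ii) deriving the decay rate by reducing the flow to a first-order ODE for $y(t):=\||x|^{-s/2}u(t)\|_2^2$.

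\textbf{Step 1: Galerkin approximation and a priori bounds.} I would pick an orthonormal basis $\{\omega_j\}$ of $W_0^{1,p}(\Omega)$ (for instance eigenfunctions of a suitable weighted operator), set $u_m(x,t)=\sum_{j=1}^m g_{jm}(t)\omega_j(x)$, and solve the projected ODE system with $u_m(0)\to u_0$ in $W_0^{1,p}(\Omega)$. The standard energy identity obtained by testing with $\partial_t u_m$ reads
\begin{equation*}
\int_0^t\bigl\||x|^{-s/2}\partial_\tau u_m\bigr\|_2^2\,d\tau+J(u_m(t))=J(u_m(0)),
\end{equation*}
so that $J(u_m(t))\le J(u_m(0))<d$ for $m$ large. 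The key structural step is to show $u_m(t)\in\mathcal{W}$ for all $t\ge 0$: by continuity of $I$ along the flow and the assumption $I(u_0)>0$, if $I(u_m(\cdot))$ were to vanish at some first time $t_*$, then $u_m(t_*)\in\mathcal{N}$ would force $J(u_m(t_*))\ge d$, contradicting the energy inequality. This confines $u_m(t)$ to $\mathcal{W}$. Then \eqref{19add5} together with $I(u_m)>0$ yields
\begin{equation*}
\tfrac{q-p}{pq}\|\nabla u_m(t)\|_p^p\le J(u_m(t))\le J(u_0),
\end{equation*}
giving the uniform bound in $L^\infty(0,\infty;W_0^{1,p}(\Omega))$, while $|x|^{-s/2}\partial_t u_m$ is uniformly bounded in $L^2(0,\infty;L^2(\Omega))$. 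Standard weak/weak-$\ast$ compactness, a monotonicity argument of Minty--Browder type to handle $|\nabla u|^{p-2}\nabla u$, and the lemmas from \cite{DZ2019} (to identify the limit of the logarithmic term $|u_m|^{q-2}u_m\ln|u_m|$) deliver a global weak solution with $u(t)\in\mathcal{W}$.

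\textbf{Step 2: Deriving the ODE for $y(t)$.} Testing \eqref{19add1} with $\phi=u$ gives $\tfrac12\frac{d}{dt}y(t)=-I(u(t))$. The point of the family of wells $\{d(\alpha)\}$ is that under $J(u_0)<d(\alpha)$ one obtains the coercive estimate
\begin{equation*}
I(u(t))\ge\Bigl[1-\Bigl(\tfrac{pq}{q-p}\tfrac{1}{(r(\alpha))^p}J(u_0)\Bigr)^{\frac{q+\alpha-p}{p}}\Bigr]\|\nabla u(t)\|_p^p,
\end{equation*}
which follows by combining the definition of $r(\alpha)$, the bound $\|\nabla u\|_p^p\le\frac{pq}{q-p}J(u)\le\frac{pq}{q-p}J(u_0)$, and the Hardy--Sobolev-type embedding controlling the logarithmic term (with constant $\widetilde C$ from \eqref{20h}). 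The same Hardy--Sobolev inequality gives $y(t)\le\widetilde C\,\|\nabla u(t)\|_p^2$, hence $\|\nabla u(t)\|_p^p\ge\widetilde C^{-p/2}y(t)^{p/2}$, so
\begin{equation*}
y'(t)\le -\tfrac{2}{\widetilde C^{p/2}}\Bigl[1-\Bigl(\tfrac{pq}{q-p}\tfrac{1}{(r(\alpha))^p}J(u_0)\Bigr)^{\frac{q+\alpha-p}{p}}\Bigr]y(t)^{p/2}.
\end{equation*}
Separating variables: for $p=2$ this yields Gr\"onwall-type exponential decay, and for $p>2$ integration of $y'\le -Ky^{p/2}$ produces the algebraic bound $y(t)\le\{(\tfrac{p}{2}-1)Kt+y(0)^{1-p/2}\}^{\frac{2}{2-p}}$. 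Matching $K$ against the stated prefactors reproduces the two formulas in the theorem verbatim.

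\textbf{Main obstacles.} The principal technical hurdle is the invariance argument for $\mathcal{W}$ at the Galerkin level: because the logarithmic nonlinearity $|u|^{q-2}u\ln|u|$ is only continuous (and changes sign near $|u|=1$), one must argue by continuity of $I(u_m(\cdot))$ rather than by a smooth flow-invariance theorem, and one must verify that the approximating Nehari-type functional converges to $I$ in the appropriate sense; I would invoke the auxiliary lemmas of \cite{DZ2019} that bound $\int|u|^q|\ln|u||\,dx$ by $\|\nabla u\|_p^p$ plus lower-order terms. The second delicate point is the identification of the logarithmic term in the limit $m\to\infty$: one needs a strong $L^q$ convergence (via Aubin--Lions with the weighted time-derivative bound $|x|^{-s/2}\partial_t u_m\in L^2_{t,x}$) combined with an equi-integrability argument to pass $\int|u_m|^{q-2}u_m\ln|u_m|\phi\,dx$ through the limit. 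Once these are in place, the decay estimate is a clean ODE comparison, and the two cases $p=2$ versus $p>2$ separate purely algebraically.
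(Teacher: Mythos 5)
Your proposal follows essentially the same route as the paper: Galerkin approximation with the first-exit-time/Nehari-manifold contradiction to keep $u^m(t)\in\mathcal{W}$, Aubin--Lions plus a Minty-type monotonicity argument to identify the limits, and then the ODE $y'\le -K y^{p/2}$ obtained from $-2I(u)$ via the bound $\|\nabla u\|_p^p\le\frac{pq}{q-p}J(u_0)$ and the Hardy--Sobolev estimate $y\le\widetilde C\|\nabla u\|_p^2$. The only slight misattribution is that the logarithmic term is controlled not by the Hardy--Sobolev inequality but by the elementary inequality $s^q\ln s\le\frac{1}{e\alpha}s^{q+\alpha}$ combined with the embedding $W_0^{1,p}(\Omega)\hookrightarrow L^{q+\alpha}(\Omega)$ (constant $B_\alpha$, which is what produces $r(\alpha)$); otherwise the argument matches the paper's.
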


\begin{remark}
For any $u\in\mathcal{N}$, it follows from \eqref{19add5} and Lemma $\ref{21add}$ that 
\[J(u)=\frac 1qI(u)+\frac {q-p}{pq}\|\nabla u\|_p^p+\frac {1}{q^2}\|u\|_q^q\ge \frac {q-p}{pq}\|\nabla u\|_p^p>\frac {q-p}{pq}(r(\alpha))^p.\]
The definition of $d$ indicates $d\ge d(\alpha).$
\end{remark}

\begin{corollary}\label{cor1}
Let \eqref{1.2} hold and $u_0(x)\in W_0^{1,p}(\Omega)$. Assume that $J(u_0)=d$ and $I(u_0)\ge 0$ , then  problem \eqref{1.1} admits a global solution $u\in L^\infty(0,\infty;W_0^{1,p}(\Omega))$  with $|x|^{-\frac s2}u_t\in L^2(0,\infty;L^2(\Omega))$ and $u(t)\in \mathcal{W}\cup \partial\mathcal{W}$ for $0 \le t < \infty$.
\end{corollary}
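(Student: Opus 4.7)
The plan is to approximate the critical initial datum from below and invoke Theorem \ref{thm1}. Since $d>0$ by \eqref{20g}, the assumption $J(u_0)=d$ forces $u_0\not\equiv 0$. For $m$ large set $u_{0m}=\lambda_m u_0$ with $\lambda_m=1-\tfrac{1}{m}\uparrow 1$. A direct differentiation of \eqref{19add3} gives the Nehari-type identity
\[
\frac{d}{d\lambda}J(\lambda u_0)=\frac{1}{\lambda}I(\lambda u_0),\qquad\lambda>0,
\]
so the behaviour of $J(\lambda u_0)$ and the sign of $I(\lambda u_0)$ are controlled together. If $I(u_0)>0$, continuity yields $I(\lambda u_0)>0$ for $\lambda$ sufficiently close to $1$, whence $J(\lambda u_0)<J(u_0)=d$ for such $\lambda$. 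If instead $I(u_0)=0$, the relation $\|\nabla u_0\|_p^p=\int_\Omega|u_0|^q\ln|u_0|\,dx$ lets one rewrite
\[
I(\lambda u_0)=(\lambda^p-\lambda^q)\|\nabla u_0\|_p^p-\lambda^q\ln\lambda\,\|u_0\|_q^q,
\]
which is strictly positive on all of $(0,1)$ because $p<q$ and $\ln\lambda<0$; again $J(\lambda u_0)<d$ on that interval. Either way $u_{0m}\in\mathcal{W}$, $J(u_{0m})<d$, and $u_{0m}\to u_0$ in $W_0^{1,p}(\Omega)$.

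Theorem \ref{thm1} now supplies, for each such $m$, a global weak solution $u_m\in L^\infty(0,\infty;W_0^{1,p}(\Omega))$ with $|x|^{-s/2}u_{m,t}\in L^2(0,\infty;L^2(\Omega))$ and $u_m(t)\in\mathcal{W}$ for all $t\ge 0$. Combining identity \eqref{19add5} with $I(u_m(t))\ge 0$ and the energy equality \eqref{19add2} for $u_m$ produces the $m$-independent bounds
\[
\|\nabla u_m(t)\|_p^p\le\frac{pq}{q-p}J(u_{0m})<\frac{pq}{q-p}d,\qquad\int_0^\infty\||x|^{-s/2}u_{m,\tau}\|_2^2\,d\tau<d.
\]

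The remainder is a standard compactness argument: extract weak-$\ast$ and weak limits in the appropriate spaces, use Aubin--Lions to promote strong convergence in $L^q(\Omega\times(0,T))$ for every finite $T$, and pass to the limit in \eqref{19add1}. The logarithmic term $|u|^{q-2}u\ln|u|$ is locally Lipschitz on bounded sets and is handled exactly as in the proof of Theorem \ref{thm1}. Weak lower semicontinuity together with the continuity of $J$ and $I$ along the approximation yields $J(u(t))\le d$ and $I(u(t))\ge 0$ for every $t\ge 0$, so $u(t)\in\mathcal{W}\cup\partial\mathcal{W}$. The genuinely delicate point of the whole proof is the first one --- propagating $I(u_0)\ge 0$ to the strict inequality $I(\lambda u_0)>0$ without also pushing $J$ above $d$ --- which is why the split between $I(u_0)>0$ and $I(u_0)=0$ must be made explicit; once the scaled data lie inside the open well, the rest is routine.
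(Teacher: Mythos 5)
Your overall route is exactly the one the paper intends: the paper omits the proof and defers to Case 3 of Corollary 1 in \cite{DZ2019}, which is precisely the scaling approximation $u_{0m}=\lambda_m u_0$, $\lambda_m\uparrow 1$, followed by an application of the subcritical theorem and a compactness passage to the limit. Your treatment of the first (and, as you say, delicate) step is correct and fully explicit: the identity $\frac{d}{d\lambda}J(\lambda u_0)=\lambda^{-1}I(\lambda u_0)$ holds for this functional (the logarithm contributes the extra term $-\lambda^{q}\ln\lambda\,\|u_0\|_q^q$ to $I(\lambda u_0)$, which you correctly keep and which has the favorable sign on $(0,1)$), and the case split $I(u_0)>0$ versus $I(u_0)=0$ is handled properly; note that $u_0\neq 0$ indeed follows from $J(u_0)=d\ge M>0$.

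The one step that does not work as written is the final assertion that ``weak lower semicontinuity \dots yields $I(u(t))\ge 0$.'' Lower semicontinuity of $\|\nabla\cdot\|_p^p$ along the weak limit, combined with the strong convergence of the logarithmic term, gives $I(u(t))\le\liminf_m I(u_m(t))$ — an upper bound on $I(u(t))$, which is useless for concluding $I(u(t))\ge 0$ (it works for $J(u(t))\le d$, where the inequality points the right way, but not for $I$). The conclusion is still reachable by a standard detour: the limit $u$ is itself a global weak solution with $J(u_0)=d$ and $I(u_0)\ge 0$, and one argues by contradiction that $I(u(t))$ can never become negative — if it did, the invariance argument of Lemma \ref{20lemadd2}(3) would place $u(t)$ in $\mathcal{V}$ with $J(u(t))<d$ after some time $t_0$, after which the blow-up mechanism (or, more simply, the strict decrease of $J$ below $d$ combined with Lemma \ref{20lemadd2}(2) and Theorem \ref{addthm2}-type arguments) contradicts the global existence already established. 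Alternatively, if the solution is stationary ($u_t\equiv 0$) one checks $I\equiv I(u_0)\ge 0$ directly. Either way, replace the appeal to lower semicontinuity for $I$ by this invariance/contradiction argument; the rest of your proof stands.
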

Based on the proof of Theorem \ref{thm1}, following the proof of Corollary 1: Case 3 in \cite{DZ2019}, one directly can prove Corollary \ref{cor1}. In the present paper, we omit the process.

In what follows, we introduce the finite time blow-up results. For the sake of simplicity, throughout this paper, we set 
  \begin{equation}
\label{5.1}
L(t):=\frac 12\Big\||x|^{-\frac s2}u\Big\|_2^2.
\end{equation}
\begin{theorem}\label{thm2}
Let \eqref{1.2} hold. If $J (u_0)< 0$, and $u$ is a weak solution to problem \eqref{1.1}, then $u$ blows up at some finite time $T$ with
 \[T\le  \frac{2L(0)}{(2-q)qJ(u_0)}.\]
\end{theorem}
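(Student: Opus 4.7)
The plan is to derive a superlinear differential inequality of the form $L'(t)\ge c\,L(t)^{q/2}$ and then integrate it directly. First, test the weak formulation \eqref{19add1} with $\phi=u$ to obtain
\[
L'(t) = -I(u(t))\qquad\text{a.e. }t\in(0,T).
\]
Substituting the identity \eqref{19add5} in the form $-I(u) = -qJ(u) + \frac{q-p}{p}\|\nabla u\|_p^p + \frac{1}{q}\|u\|_q^q$ and discarding the two nonnegative terms yields $L'(t)\ge -qJ(u(t))$. The energy identity \eqref{19add2} gives $J(u(t))\le J(u_0)<0$, so $L'(t)\ge -qJ(u_0)>0$; in particular $L$ is strictly increasing and $L(t)\ge L(0)>0$ (note $J(u_0)<0$ forces $u_0\not\equiv 0$).

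The decisive step is to upgrade this constant lower bound to a power-law one. Set $\psi(t):=-J(u(t))$, so $\psi(t)\ge -J(u_0)>0$ and, by differentiating \eqref{19add2}, $\psi'(t)=\||x|^{-s/2}u_t\|_2^2$ a.e. Applying Cauchy--Schwarz to $L'(t)=\bigl(|x|^{-s/2}u_t,|x|^{-s/2}u\bigr)_{L^2}$ yields
\[
\bigl(L'(t)\bigr)^2 \le \||x|^{-s/2}u_t\|_2^2 \cdot \||x|^{-s/2}u\|_2^2 = 2L(t)\,\psi'(t).
\]
Multiplying the inequality $L'(t)\ge q\psi(t)$ through by $L'(t)>0$ and combining,
\[
q\,\psi(t)\,L'(t) \le \bigl(L'(t)\bigr)^2 \le 2L(t)\,\psi'(t),
\]
which rearranges to $\psi'(t)/\psi(t)\ge \tfrac{q}{2}\,L'(t)/L(t)$. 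Integrating over $[0,t]$ gives $\psi(t)\ge \psi(0)\bigl(L(t)/L(0)\bigr)^{q/2}$, and therefore
\[
L'(t) \ge q\psi(t) \ge \frac{-qJ(u_0)}{L(0)^{q/2}}\,L(t)^{q/2}.
\]

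Because $q>2$, separating variables in this ODI produces
\[
L(t)^{1-q/2} \le L(0)^{1-q/2} - \frac{(q-2)\bigl(-qJ(u_0)\bigr)}{2\,L(0)^{q/2}}\,t,
\]
whose right-hand side vanishes precisely at $t = \frac{2L(0)}{(2-q)qJ(u_0)}$. Since $L(t)^{1-q/2}>0$, this forces $\||x|^{-s/2}u(\cdot,t)\|_2^2 = 2L(t)\to+\infty$ at some time no larger than the stated bound, establishing finite-time blow-up and the claimed estimate on $T$.

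The main obstacle is the logarithmic-derivative manoeuvre in the second paragraph: the naive bound $L'\ge -qJ(u_0)$ only yields linear growth of $L$, so a quadratic gain is needed. This is supplied by Cauchy--Schwarz together with the energy dissipation identity $\psi'=\||x|^{-s/2}u_t\|_2^2$, which converts the linear lower bound into a power law of the correct exponent $q/2$. The formal differentiations of $L(t)$ and $\psi(t)$ are justified by the regularity $|x|^{-s/2}u_t\in L^2(0,T;L^2(\Omega))$ from Definition \ref{def2.1}, so all identities are valid almost everywhere in $t$ and integration is legitimate.
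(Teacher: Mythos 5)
Your proposal is correct and is essentially the paper's own argument: with $\psi=-J(u(t))$ playing the role of the paper's $K(t)$, your inequality $\psi'/\psi\ge\frac q2\,L'/L$ is exactly the paper's monotonicity of $K(t)L^{-q/2}(t)$, obtained from the same Cauchy--Schwarz estimate $(L')^2\le 2L\psi'$ combined with $L'\ge q\psi$, and both proofs then integrate the resulting ODI $L'\ge cL^{q/2}$ to reach the identical bound on $T$. No substantive differences.
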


\begin{theorem}\label{addthm2}
Let \eqref{1.2} hold. If $J (u_0)\le M$ defined in \eqref{20g}, $I(u_0)<0$, and $u$ is a weak solution to problem \eqref{1.1}, then $u$ blows up at some finite time $T$. In addition, 
 \[T\le \frac{8(q-1)L(t_0)}{(q-2)^2q(M-J(u(x,t_0)))}+t_0.\]
 where $t_0\ge 0$ is any finite time  such that $J(u(x,t_0))<M$. In particular, for $J (u_0)< M$, the finite blow-up time satisfies 
\[T\le \frac{8(q-1)L(0)}{(q-2)^2q(M-J(u_0))}.\]
\end{theorem}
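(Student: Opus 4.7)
The plan is to combine the invariance of the set $\{I<0,\ J<M\}$ with a Cauchy-Schwarz-driven differential inequality that forces $L(t):=\frac{1}{2}\||x|^{-s/2}u\|_2^2$ to blow up in finite time. If $J(u_0)<M$ take $t_0=0$; if $J(u_0)=M$ with $I(u_0)<0$, the energy identity $\frac{d}{dt}J(u(t))=-\||x|^{-s/2}u_t\|_2^2\le 0$ together with the fact that a stationary solution would force the strictly positive quantity $L'(t)=-I(u_0)$ to vanish shows that $J$ drops strictly below $M$ at some first $t_0>0$. On $[t_0,T_{\max})$ the invariance of $\mathcal{V}$ is then the standard Payne-Sattinger contradiction: were $I(u(t_1))=0$ at some first $t_1>t_0$, then since $L(t)>L(t_0)>0$ for $t>t_0$ one has $u(t_1)\ne 0$, so $u(t_1)\in\mathcal{N}$ would give $J(u(t_1))\ge d\ge M$, contradicting $J(u(t))\le J(u(t_0))<M$.

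On this invariant set I establish two key estimates. Testing the equation with $u$ gives $L'(t)=-I(u(t))$; substituting \eqref{19add5} and using $\|\nabla u\|_p^p>r_*^p$ on $\mathcal{V}$ (a consequence of $I(u)<0$ and the Hardy-Sobolev inequality behind Lemma \ref{2lem20}), so that $\frac{q-p}{p}\|\nabla u\|_p^p>qM$, one obtains
\begin{equation*}
L'(t)\ge q\bigl(M-J(u(t))\bigr)=:q\psi(t).
\end{equation*}
Cauchy-Schwarz applied to $L'(t)=(|x|^{-s/2}u_t,|x|^{-s/2}u)_{L^2}$ combined with $\psi'(t)=\||x|^{-s/2}u_t\|_2^2$ yields $(L'(t))^2\le 2L(t)\psi'(t)$. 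Multiplying this by the lower bound $L'\ge q\psi$ and rearranging produces $\psi'/\psi\ge(q/2)(L'/L)$, which integrates to the amplification estimate $\psi(t)\ge\psi(t_0)(L(t)/L(t_0))^{q/2}$, whence $L'(t)\ge(q\psi(t_0)/L(t_0)^{q/2})L(t)^{q/2}$. Since $q>2$, direct integration of $L^{-q/2}\,dL$ forces $L\to\infty$ in finite time.

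To pin down the specific constant $8(q-1)/[q(q-2)^2]$ in the stated estimate, I would apply Levine's concavity method to the auxiliary
\begin{equation*}
A(t)=\int_{t_0}^t L(\tau)\,d\tau+(T^*-t)L(t_0)+\beta(t-t_0+\tau_0)^2,
\end{equation*}
with free parameters $\beta,\tau_0,T^*>0$. An extended Cauchy-Schwarz yields $(A'(t))^2\le 2A(t)\bigl[\psi(t)-\psi(t_0)+2\beta(t-t_0+\tau_0)\bigr]$, while $A''(t)\ge q\psi(t)+2\beta$. Choosing $\alpha=(q-2)/2$ makes the leading $\psi(t)$ contributions in $AA''-(1+\alpha)(A')^2\ge 0$ cancel, and the residual constraint is satisfied for $\beta$ sufficiently small; then $A^{-\alpha}$ is positive, decreasing, and concave, hence reaches zero by $t_0+A(t_0)/[\alpha A'(t_0)]$, and optimizing over $\beta,\tau_0$ produces the stated bound. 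The subtlest step is the critical case $J(u_0)=M$, where one must argue delicately that $J$ strictly slips below $M$ at some positive time rather than remaining pinned there; a secondary technicality is checking that Levine's parameter choices are compatible with the concavity inequality across the full interval $[t_0,T^*]$.
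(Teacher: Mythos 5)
Correct, and essentially the paper's own route: you reduce to a time $t_0$ with $J(u(t_0))<M$ and $I<0$ thereafter, use $\|\nabla u\|_p\ge r_*$ (this is Lemma \ref{1lem20}(2), which rests on the Sobolev embedding rather than on Hardy--Sobolev) to get $L'(t)\ge q\bigl(M-J(u(t))\bigr)$, and then run Levine's concavity method on the same auxiliary functional with exponent $q/2$ and optimize the two free parameters, exactly as in \eqref{5.5}--\eqref{5.7}; your preliminary first-order argument giving $L'\gtrsim L^{q/2}$ is a fine (indeed sharper) alternative proof that $T<\infty$. Two small repairs: in the extended Cauchy--Schwarz step the additive term must be $2\beta$, not $2\beta(t-t_0+\tau_0)$, since otherwise the cancellation at $\alpha=(q-2)/2$ leaves a $t$-dependent residual and the constant $\frac{8(q-1)}{q(q-2)^2}$ is not recovered (with $2\beta$ the constraint becomes $2\beta\le\frac{q(M-J(u(t_0)))}{q-1}$, and for the stated bound you take $\beta$ maximal in this range, not small); and in the critical case $J(u_0)=M$ you should also rule out $I$ vanishing on $[0,t_0]$, which follows from the same stationarity observation you already invoke: at a first zero $t_1$ of $I$ one would get $J(u(t_1))\ge d\ge M=J(u_0)\ge J(u(t_1))$, forcing $u_t\equiv 0$ on $[0,t_1]$ and hence $I(u(t_1))=I(u_0)<0$, a contradiction.
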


Next, we give the finite time blow-up result and estimate the upper
and lower bounds of the blow-up time for the arbitrarily high initial energy.
\begin{theorem}\label{thm3}
Let $u$ be a weak solution of problem \eqref{1.1}, and \eqref{1.2} hold. If 
\begin{equation}
\label{20add4}
0<C_2J(u_0)<L(0) -C_1
\end{equation}
with  $C_1=\frac{\widetilde{C}}{2}$ and  $C_2=\frac {pq}{q-p}\frac{\widetilde{C}}{2}$, here $\widetilde{C}$ is shown in \eqref{20h}, 
then $u$ blows up at some finite time $T$. Moreover, 
the upper of the blow-up time is given by 
\[T\le \frac{4(q-1)p\widetilde{C}L(0)}{(q-2)^2(q-p)F(0)}.\]
with $F(0)=L(0) -C_1-C_2E(u_0)>0.$
\end{theorem}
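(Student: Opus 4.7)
The approach is a Levine--Kaplan-type concavity argument built around the auxiliary quantity
\begin{equation*}
F(t):=L(t)-C_1-C_2\,J\bigl(u(x,t)\bigr),
\end{equation*}
which by hypothesis \eqref{20add4} satisfies $F(0)>0$. First I would derive $L'(t)=-I(u(t))$ by testing \eqref{19add1} with $\phi=u$, and record the dissipation $\tfrac{d}{dt}J(u(t))=-\||x|^{-s/2}u_t\|_2^2\le 0$ from \eqref{19add2}. Substituting the Nehari identity \eqref{19add5} rewrites this as
\begin{equation*}
L'(t)=-qJ(u(t))+\tfrac{q-p}{p}\|\nabla u\|_p^p+\tfrac{1}{q}\|u\|_q^q.
\end{equation*}

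The Hardy--Sobolev inequality associated with the constant $\widetilde{C}$ from \eqref{20h} supplies a lower bound $\|\nabla u\|_p^p\ge\tfrac{2}{\widetilde{C}}L(t)$ (which for $p>2$ uses the continuity bootstrap $L(t)\ge L(0)>C_1$ coming from $F(0)>0$). The constants $C_1=\widetilde{C}/2$ and $C_2=\tfrac{pq}{q-p}\tfrac{\widetilde{C}}{2}$ are calibrated precisely so that $\tfrac{q}{C_2}=\tfrac{2(q-p)}{p\widetilde{C}}$ and $qC_1/C_2=\tfrac{q-p}{p}$; inserting the Hardy--Sobolev bound into the identity above and discarding the non-negative $\|u\|_q^q$ term therefore yields
\begin{equation*}
L'(t)\ \ge\ \tfrac{q}{C_2}F(t)+\tfrac{q-p}{p}.
\end{equation*}
Combining this with the dissipation identity gives $F'(t)=L'(t)+C_2\||x|^{-s/2}u_t\|_2^2\ge\tfrac{q}{C_2}F(t)$, so by Gr\"onwall $F(t)\ge F(0)e^{qt/C_2}>0$; in particular $L$ is strictly increasing and $L'(0)\ge\tfrac{2(q-p)}{p\widetilde{C}}F(0)$.

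To force finite-time blow-up I would now run a Levine-type concavity argument directly on $L$. Cauchy--Schwarz applied to $L'(t)=\int_{\Omega}|x|^{-s}uu_t\,dx$ gives $(L'(t))^2\le 2L(t)\||x|^{-s/2}u_t\|_2^2$, and the integrated energy law yields $\int_0^t\||x|^{-s/2}u_\tau\|_2^2\,d\tau=J(u_0)-J(u(t))$. Combining these two ingredients with the lower bound on $L'$ already obtained, the aim is to establish a differential inequality of the form $L(t)L''(t)\ge(1+\alpha)(L'(t))^2$ with $\alpha=\tfrac{(q-2)^2}{8(q-1)}$. This makes $L^{-\alpha}(t)$ strictly positive and concave with $(L^{-\alpha})'(0)<0$, so it must vanish at some $T\le L(0)/(\alpha L'(0))$; plugging in the lower bound for $L'(0)$ and the value of $\alpha$ yields exactly $T\le\tfrac{4(q-1)p\widetilde{C}L(0)}{(q-2)^2(q-p)F(0)}$.

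The main obstacle is producing that concavity inequality with the precise exponent $\alpha$. Computing $L''$ rigorously requires testing \eqref{19add1} formally with $\phi=u_t$ (through a Galerkin or time-regularization argument), rewriting the mixed term $\int|\nabla u|^{p-2}\nabla u\cdot\nabla u_t\,dx$ in terms of $\||x|^{-s/2}u_t\|_2^2$ via the same weak form tested with $u_t$, and then arranging the remaining pieces so that a non-negative weighted Cauchy--Schwarz-type square can be isolated and dropped while retaining the factor $1+\alpha$ in front of $(L')^2$. Tracking the constants so that $\alpha=(q-2)^2/(8(q-1))$ emerges is the delicate part on which the announced upper bound on $T$ hinges.
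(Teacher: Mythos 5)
Your first half is essentially the paper's: you derive $L'(t)=-I(u)$, use the Hardy--Sobolev constant $\widetilde{C}$ to bound $\|\nabla u\|_p^p$ from below by $\tfrac{2}{\widetilde{C}}L(t)$ (up to the additive constant that is exactly why $C_1$ appears; the paper uses $s^2\le s^p+1$ rather than your bootstrap, but either works), and conclude $F'(t)\ge L'(t)\ge \tfrac{2(q-p)}{p\widetilde{C}}F(t)$, hence $F(t)\ge F(0)e^{2(q-p)t/(p\widetilde{C})}>0$ and $L$ increasing. Note, however, that the paper already extracts finite-time blow-up at this stage by a contradiction you omit: for a global solution one has $0\le J(u(t))\le J(u_0)$ (otherwise Theorem \ref{thm2} applies), so $(2L(t))^{1/2}\le t^{1/2}J^{1/2}(u_0)+(2L(0))^{1/2}$ grows at most like $\sqrt{t}$, which is incompatible with the exponential lower bound $L(t)\ge F(t)$.

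The genuine gap is in your second half. You propose to prove $L(t)L''(t)\ge(1+\alpha)(L'(t))^2$ with $\alpha=\tfrac{(q-2)^2}{8(q-1)}$ directly, and you yourself flag this as the unresolved ``delicate part.'' It cannot be closed along the route you sketch: from \eqref{19add5} and \eqref{19add2} one gets formally $L''=-\tfrac{d}{dt}I(u)=q\||x|^{-s/2}u_t\|_2^2+\tfrac{q-p}{p}\tfrac{d}{dt}\|\nabla u\|_p^p+\tfrac1q\tfrac{d}{dt}\|u\|_q^q$, and the last two terms have no sign, so $L''\ge q\||x|^{-s/2}u_t\|_2^2$ fails; moreover for weak solutions of the degenerate $p$-Laplacian problem $L''$ is not even available pointwise. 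This is precisely why the paper (following Levine and Philippin--Proytcheva) applies the concavity Lemma \ref{lem2.4} not to $L$ but to $M(t)=\int_0^tL(\tau)\,d\tau+(T-t)L(0)+\tfrac{\gamma}{2}(t+\sigma)^2$, whose second derivative is just $M''=L'+\gamma=-I(u)+\gamma$, needing only one derivative of $L$; the Cauchy--Schwarz step then pairs the time-integrals of $\||x|^{-s/2}u\|_2^2$ and $\||x|^{-s/2}u_\tau\|_2^2$ to give $MM''-\tfrac q2(M')^2\ge 0$ for $\gamma\le\tfrac{2(q-p)}{(q-1)p\widetilde{C}}F(0)$, and the factor $\tfrac{8(q-1)}{(q-2)^2}$ in the final bound emerges from optimizing over $\sigma$ and then $\gamma$ --- not from a single concavity exponent on $L$. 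Your value of $\alpha$ is reverse-engineered from the answer; without the auxiliary functional $M$ the announced upper bound on $T$ is not established.
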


\begin{theorem}\label{thm4}
Suppose that all conditions of  Theorem $\ref{thm3}$ are fulfilled and $q+\alpha<p\big(1+\frac 2N\big)$. Then the lower bound of the blow-up time can be estimate by
 \begin{equation*}
\label{20add6}
T\ge \frac{L^{1-\kappa}(0)}{\alpha^{-1}\Big[C_*^{\theta(q+\alpha)}\alpha^{-\frac{\theta(q+\alpha)}{p}}[\text{diam}(\Omega)]^{\frac{s(1-\theta)(q+\alpha)}{2}}\Big]^{\frac{p}{p-\theta(q+\alpha)}}2^\kappa(\kappa-1)},
\end{equation*}
here $\alpha$ is  defined in \eqref{20e}, $\theta=\Big(\frac{1}{2}-\frac{1}{q+\alpha}\Big)\Big(\frac{1}{2}-\frac{N-p}{Np}\Big)^{-1}\in(0,1)$, $\kappa=[\frac{(1-\theta)(q+\alpha)}{2}]/[1-\frac{\theta(q+\alpha)}{p}]$ and $C_*$ is the optimal constant of embedding $W_0^{1,p}(\Omega)\hookrightarrow L^{\frac{Np}{N-p}}(\Omega)$.
\end{theorem}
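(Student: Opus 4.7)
The plan is to derive a single-term differential inequality $L'(t)\le K L(t)^{\kappa}$ with $\kappa>1$, and then obtain the bound by separating variables and integrating up to the blow-up time $T$. I would start by testing the weak formulation \eqref{19add1} with $\phi=u(\cdot,t)\in W_0^{1,p}(\Omega)$, producing
\[L'(t)=-\|\nabla u\|_p^{p}+\int_{\Omega}|u|^{q}\ln|u|\,dx,\]
and then controlling the logarithmic term via one of the pivotal lemmas of \cite{DZ2019}: for admissible $\alpha$ as in \eqref{20e},
\[\int_{\Omega}|u|^{q}\ln|u|\,dx\le\frac{1}{\alpha}\|u\|_{q+\alpha}^{q+\alpha}.\]

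For the $L^{q+\alpha}$-norm I would use Gagliardo--Nirenberg-type interpolation $\|u\|_{q+\alpha}\le\|u\|_{Np/(N-p)}^{\theta}\|u\|_{2}^{1-\theta}$ together with the Sobolev embedding $\|u\|_{Np/(N-p)}\le C_{*}\|\nabla u\|_p$, yielding
\[\|u\|_{q+\alpha}^{q+\alpha}\le C_{*}^{\theta(q+\alpha)}\|\nabla u\|_p^{\theta(q+\alpha)}\|u\|_{2}^{(1-\theta)(q+\alpha)}.\]
Writing $A=\|\nabla u\|_p^{\theta(q+\alpha)}$, $r=p/\theta(q+\alpha)$ (so $A^{r}=\|\nabla u\|_p^{p}$) and $B=\alpha^{-1}C_{*}^{\theta(q+\alpha)}\|u\|_{2}^{(1-\theta)(q+\alpha)}$, the estimate becomes $L'(t)\le -A^{r}+AB$. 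An elementary case analysis (according as $A^{r-1}\gtrless B$), equivalently Young's inequality with conjugate exponents $r$ and $r'=p/(p-\theta(q+\alpha))$, yields $-A^{r}+AB\le B^{r'}$, killing the gradient term. Finally, since $\Omega$ is bounded and $|x|\le\text{diam}(\Omega)$ on $\Omega$, we have $\|u\|_{2}^{2}\le[\text{diam}(\Omega)]^{s}\||x|^{-s/2}u\|_{2}^{2}=2[\text{diam}(\Omega)]^{s}L(t)$, and the algebraic identity $(1-\theta)(q+\alpha)\,r'=2\kappa$ assembles everything into $L'(t)\le KL(t)^{\kappa}$ with $K$ exactly equal to the constant appearing in the denominator of the stated bound (apart from the $\kappa-1$ factor).

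Separating variables and integrating from $0$ to $T^{-}$, using $L(T^{-})=+\infty$ from Theorem \ref{thm3} and $\kappa>1$ (so $L^{1-\kappa}\to 0$ at the upper endpoint), gives
\[\frac{L(0)^{1-\kappa}}{\kappa-1}\le KT,\]
which rearranges to the claimed lower bound on $T$. The main obstacle is the exponent bookkeeping: the hypothesis $q+\alpha<p(1+2/N)$ must be shown to guarantee simultaneously that (i) $2<q+\alpha<Np/(N-p)$ so the interpolation is valid with $\theta\in(0,1)$, (ii) $\theta(q+\alpha)<p$ so both $r$ and $r'$ exceed $1$ and the absorption step $-A^{r}+AB\le B^{r'}$ applies, and (iii) $\kappa>1$ so that the separation-of-variables integration produces a finite upper bound on $T$. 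The identity $(1-\theta)(q+\alpha)\,r'=2\kappa$ is what finally reconciles the exponents and makes $K$ take precisely the form advertised in the theorem.
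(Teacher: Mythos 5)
Your proposal is correct and reaches exactly the inequality $L'(t)\le K L^{\kappa}(t)$ with the same constant $K$ as the paper, followed by the same separation-of-variables integration using $L(t)\to+\infty$ as $t\to T^{-}$ from Theorem \ref{thm3}. The overall skeleton (testing with $u$, the bound $\int_\Omega|u|^q\ln|u|\,dx\le\frac{1}{\alpha}\|u\|_{q+\alpha}^{q+\alpha}$ from Lemma \ref{addlem2.4}, Gagliardo--Nirenberg interpolation through $L^{Np/(N-p)}$ and $L^{2}$, the weight removal $\|u\|_2^2\le[\mathrm{diam}(\Omega)]^s\,2L(t)$, and the exponent checks that $q+\alpha<p(1+\frac2N)$ forces $\theta(q+\alpha)<p$ and $\kappa>1$) coincides with the paper's. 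The one genuine divergence is how the gradient term is eliminated: the paper first deduces $I(u(t))<0$ on $[0,T)$ from \eqref{6.3} and \eqref{6.5}, uses $\|\nabla u\|_p^p<\frac{1}{\alpha}\|u\|_{q+\alpha}^{q+\alpha}$ to replace $\|\nabla u\|_p^{\theta(q+\alpha)}$ inside the interpolation, and then solves the resulting self-referential inequality for $\|u\|_{q+\alpha}^{q+\alpha}$ (this is where the factor $\alpha^{-\theta(q+\alpha)/p}$ and the exponent $\frac{p}{p-\theta(q+\alpha)}$ arise); you instead retain $-\|\nabla u\|_p^p$ in $L'(t)$ and absorb it via the Young-type estimate $-A^r+AB\le B^{r'}$ with $r=p/(\theta(q+\alpha))$. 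Both routes are valid and, as one checks by collecting the powers of $\alpha$, $C_*$ and $\mathrm{diam}(\Omega)$, they produce literally the same $K$; your version is marginally more self-contained since it does not need the sign of $I(u)$, which in the paper is imported from the proof of Theorem \ref{thm3}.
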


\section{Preliminaries}
In this section, we give some lemmas, which are of great significance in the proofs of our main results. Since the definitions of $J$ and $I$ are same as \cite{DZ2019}, here we directly borrow some lemmas without detailed proofs. 

Firstly, we give some notations obtained in \cite{DZ2019}. Let \eqref{1.2} hold. For any $\alpha$ satisfying 
\begin{equation}
\label{20e}
0<\alpha\le \frac{Np}{N-p}-q,
\end{equation}
define 
\begin{equation}
\label{20f}
r(\alpha):=\Big(\frac{\alpha}{B_\alpha^{q+\alpha}}\Big)^{\frac{1}{q+\alpha-p}},
\end{equation}
 where $B_\alpha$ is the optimal embedding constant of $W_0^{1,p}(\Omega)\hookrightarrow  L^{p+\alpha}(\Omega).$

 \begin{lemma}\label{21add}
\upcite{DZ2019}Let \eqref{1.2} hold and $u\in W_0^{1,p}(\Omega)\backslash\{0\}$. Then for any $\alpha$ satisfying \eqref{20e}, we have
\begin{enumerate}[$(1)$]
  \item if $0<\|\nabla u\|_p\le r(\alpha)$, then $I(u)>0$;
  \item if $I(u)\le 0$, then $\|\nabla u\|_p> r(\alpha)$,
\end{enumerate}
where $r(\alpha)$ is defined in \eqref{20f}.
 \end{lemma}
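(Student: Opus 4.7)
The plan is to deduce part (1) from an elementary logarithmic inequality combined with the Sobolev embedding, and then to recover part (2) as the contrapositive of part (1). First I would establish the pointwise estimate $\ln t \le t^{\alpha}/\alpha$ for every $t \ge 1$, obtained by integrating $1/s \le s^{\alpha-1}$ on $[1,t]$. Splitting $\Omega$ into the sets $\{|u|\ge 1\}$ and $\{|u|<1\}$ and discarding the non-positive contribution on the latter, this yields
\[\int_\Omega |u|^q \ln|u|\, dx \le \frac{1}{\alpha}\int_{\{|u|\ge 1\}} |u|^{q+\alpha}\, dx \le \frac{1}{\alpha}\|u\|_{q+\alpha}^{q+\alpha}.\]

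Since \eqref{20e} guarantees $q+\alpha\le Np/(N-p)$, the continuous embedding $W_0^{1,p}(\Omega)\hookrightarrow L^{q+\alpha}(\Omega)$ is valid; inserting the associated optimal constant $B_\alpha$ and chaining with the previous display, one obtains
\[I(u)=\|\nabla u\|_p^p-\int_\Omega |u|^q\ln|u|\, dx \ge \|\nabla u\|_p^p\left(1-\frac{B_\alpha^{q+\alpha}}{\alpha}\|\nabla u\|_p^{q+\alpha-p}\right).\]
By the definition \eqref{20f} of $r(\alpha)$, the parenthesis is strictly positive whenever $\|\nabla u\|_p<r(\alpha)$, so $I(u)>0$ on that open range. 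Part (2) then follows at once as the contrapositive: any nonzero $u$ with $I(u)\le 0$ must satisfy $\|\nabla u\|_p>r(\alpha)$.

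The only delicate point is the boundary case $\|\nabla u\|_p=r(\alpha)$ in (1), where the displayed lower bound degenerates to $I(u)\ge 0$. I expect this to be the main obstacle and would handle it by a direct dichotomy: if $|u|\le 1$ almost everywhere, then $\int_\Omega |u|^q\ln|u|\, dx \le 0$, so $I(u)\ge \|\nabla u\|_p^p > 0$ since $u \not\equiv 0$; otherwise $|u|>1$ on a set of positive measure, on which the bound $\ln|u|< |u|^\alpha/\alpha$ is strict, producing a strict improvement one step earlier in the chain and forcing $I(u)>0$ as well. In both cases the strict inequality is preserved at the boundary, completing part (1).
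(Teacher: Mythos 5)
Your proof is correct and is essentially the standard argument behind this lemma, which the paper itself imports from \cite{DZ2019} without proof: the elementary bound $\ln t\le t^{\alpha}/\alpha$ on $\{|u|\ge 1\}$ combined with the embedding $W_0^{1,p}(\Omega)\hookrightarrow L^{q+\alpha}(\Omega)$ (the ``$p+\alpha$'' in the paper's definition of $B_\alpha$ is a typo, as its own use of $\|u\|_{q+\alpha}\le B_\alpha\|\nabla u\|_p$ in \eqref{21a} confirms) yields exactly the threshold $r(\alpha)$ of \eqref{20f}, and part (2) is the contrapositive since $u\neq 0$ forces $\|\nabla u\|_p>0$. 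Your dichotomy at the endpoint $\|\nabla u\|_p=r(\alpha)$ correctly upgrades $I(u)\ge 0$ to $I(u)>0$, so the ``$\le$'' in case (1) is fully justified.
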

 
\begin{lemma} \label{lem20}
\upcite{DZ2019}Let \eqref{1.2} hold. Then 
\[r_*:=\sup_{\{\alpha \text{ satisfies }\eqref{20e}\}}r(\alpha)\]
exists and $0<r_*\le r^*<\infty$, where 
\[r^*:=\sup_{\{\alpha \text{ satisfies }\eqref{20e}\}}\sigma(\alpha) \text{ with } \sigma(\alpha):=\Big(\frac{\alpha}{\kappa^{q+\alpha}}\Big)^{\frac{1}{q+\alpha-p}}|\Omega|^{\frac{\alpha}{q(q+\alpha-p)}}.\]
Here, $|\Omega|$ is the measure of $\Omega$, $\kappa$ is the optimal embedding constant of $W_0^{1,p}(\Omega)\hookrightarrow  L^{q}(\Omega).$
\end{lemma}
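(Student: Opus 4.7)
The plan is to sandwich $r_*$ between $0$ and $r^*$ by first establishing the pointwise estimate $r(\alpha)\le\sigma(\alpha)$ for every $\alpha$ satisfying \eqref{20e}, and then verifying that $\sigma$ is bounded on its admissible range. Existence of $r_*$ as a supremum in $(0,\infty)$ will then be automatic from these two facts.

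The heart of the argument is to compare the two Sobolev embedding constants $B_\alpha$ (for $W_0^{1,p}(\Omega)\hookrightarrow L^{q+\alpha}(\Omega)$) and $\kappa$ (for $W_0^{1,p}(\Omega)\hookrightarrow L^q(\Omega)$). Since $\Omega$ is bounded and $q\le q+\alpha\le \frac{Np}{N-p}$, H\"older's inequality gives
\[
\|u\|_q\le |\Omega|^{\frac{\alpha}{q(q+\alpha)}}\|u\|_{q+\alpha}.
\]
Combining this with $\|u\|_{q+\alpha}\le B_\alpha\|\nabla u\|_p$ and invoking the optimality of $\kappa$ yields
\[
\kappa\le B_\alpha|\Omega|^{\frac{\alpha}{q(q+\alpha)}},\qquad\text{equivalently}\qquad \kappa^{q+\alpha}\le B_\alpha^{q+\alpha}|\Omega|^{\frac{\alpha}{q}}.
\]
Substituting this bound into the definition of $r(\alpha)$ and taking the $(q+\alpha-p)$-th root — a legitimate operation since $q>p$ forces $q+\alpha-p>0$ — produces $r(\alpha)\le\sigma(\alpha)$. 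Passing to the supremum over admissible $\alpha$ gives $r_*\le r^*$ at once.

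Next I would verify $r^*<\infty$. The map $\alpha\mapsto\sigma(\alpha)$ is continuous on $\bigl(0,\frac{Np}{N-p}-q\bigr]$ because the denominator $q+\alpha-p\ge q-p>0$ keeps all exponents bounded. Moreover, as $\alpha\to 0^+$, the factor $\alpha^{1/(q+\alpha-p)}\to 0$ while the remaining factors stay bounded, so $\sigma$ extends continuously to the compact interval $\bigl[0,\frac{Np}{N-p}-q\bigr]$ with the extension vanishing at $0$. Consequently $\sigma$ attains a finite maximum on this interval, i.e.\ $r^*<\infty$. Strict positivity $r_*>0$ is immediate: for any single admissible $\alpha_0$ one has $r(\alpha_0)>0$ because both $\alpha_0$ and $B_{\alpha_0}$ are positive, whence $r_*\ge r(\alpha_0)>0$.

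I expect the only real subtlety to be identifying the correct Sobolev–H\"older comparison between $\kappa$ and $B_\alpha$; the rest is algebraic bookkeeping and elementary continuity. Notably, no PDE structure from \eqref{1.1} enters at all — the lemma is purely a functional-analytic statement about optimal embedding constants on bounded domains.
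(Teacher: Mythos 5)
Your proof is correct. The paper gives no proof of this lemma — it is quoted from \cite{DZ2019} — but your argument (H\"older's inequality $\|u\|_q\le|\Omega|^{\frac{\alpha}{q(q+\alpha)}}\|u\|_{q+\alpha}$ to compare the two optimal embedding constants, giving $\kappa^{q+\alpha}\le B_\alpha^{q+\alpha}|\Omega|^{\alpha/q}$ and hence $r(\alpha)\le\sigma(\alpha)$ after taking the $(q+\alpha-p)$-th root, together with continuity of $\sigma$ up to $\alpha=0$ to get $r^*<\infty$) is precisely the standard one used in that reference, so there is nothing substantive to add. The only point worth flagging is that you read $B_\alpha$ as the optimal constant of $W_0^{1,p}(\Omega)\hookrightarrow L^{q+\alpha}(\Omega)$ rather than $L^{p+\alpha}(\Omega)$ as literally written after \eqref{20f}; that is indeed the intended reading (it is how $B_\alpha$ is used in Step 2 of the proof of Theorem \ref{thm1} and in the exponent $B_\alpha^{q+\alpha}$ of \eqref{20f}), so your proof stands.
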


\begin{lemma}\label{1lem20}
\upcite{DZ2019}Let \eqref{1.2} hold and $u\in W_0^{1,p}(\Omega)\backslash\{0\}$.
\begin{enumerate}[$(1)$]
  \item If $0<\|\nabla u\|_p<r_*$, then $I(u)>0$;
  \item If $I(u)\le 0$, then $\|\nabla u\|_p\ge r_*$,
\end{enumerate}
where $r_*$ is defined in Lemma $\ref{lem20}$.
\end{lemma}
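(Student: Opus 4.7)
The plan is to derive Lemma \ref{1lem20} as a direct consequence of Lemma \ref{21add} together with the definition $r_*:=\sup_{\{\alpha\text{ satisfies }\eqref{20e}\}}r(\alpha)$ from Lemma \ref{lem20}. Both assertions will be obtained by choosing (or passing to the supremum over) the parameter $\alpha$ in Lemma \ref{21add}.

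For part (1), suppose $u\in W_0^{1,p}(\Omega)\backslash\{0\}$ satisfies $0<\|\nabla u\|_p<r_*$. By the definition of the supremum, there exists some $\alpha$ satisfying \eqref{20e} such that $\|\nabla u\|_p<r(\alpha)\le r_*$. In particular $0<\|\nabla u\|_p\le r(\alpha)$, so Lemma \ref{21add}(1) (applied with this specific $\alpha$) yields $I(u)>0$, as required.

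For part (2), suppose $I(u)\le 0$. Then Lemma \ref{21add}(2) gives $\|\nabla u\|_p>r(\alpha)$ for every $\alpha$ satisfying \eqref{20e}. Taking the supremum of the right-hand side over all admissible $\alpha$ produces $\|\nabla u\|_p\ge\sup_{\alpha}r(\alpha)=r_*$, which is the desired inequality.

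There is no serious obstacle here; the only subtle point is that strict inequality $\|\nabla u\|_p>r(\alpha)$ for each $\alpha$ only survives as the weak inequality $\|\nabla u\|_p\ge r_*$ after passing to the supremum (indeed the supremum $r_*$ need not be attained, which is exactly why the conclusion of (2) is stated with $\ge$ rather than $>$). This matches the statement and completes the reduction to Lemma \ref{21add}.
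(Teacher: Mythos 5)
Your proposal is correct: both parts follow exactly as you say from Lemma \ref{21add} combined with the definition of $r_*$ as a supremum (choosing a near-optimal $\alpha$ for part (1), and passing to the supremum over all admissible $\alpha$ for part (2), where the strict inequality correctly weakens to $\ge$). The paper itself omits the proof and cites \cite{DZ2019}, but your argument is the standard derivation and matches what is intended there.
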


\begin{lemma}\label{2lem20}
\upcite{DZ2019}Let \eqref{1.2} hold. Then we have
\[d\ge \frac{q-p}{pq}r_*^p,\]
where $d$ is defined in \eqref{19add6} and $r_*$ is defined in Lemma $\ref{lem20}$.
\end{lemma}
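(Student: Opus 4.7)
The plan is to combine the decomposition identity \eqref{19add5} with Lemma \ref{1lem20} applied to elements of the Nehari manifold $\mathcal{N}$. The argument is short, so I will sketch it in one pass and note where care is needed.

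First, fix an arbitrary $u \in \mathcal{N}$. By definition, $u \in W_0^{1,p}(\Omega)\setminus\{0\}$ and $I(u) = 0$. Plugging $I(u) = 0$ into the identity \eqref{19add5} gives
\[
J(u) = \frac{q-p}{pq}\|\nabla u\|_p^p + \frac{1}{q^2}\|u\|_q^q \ge \frac{q-p}{pq}\|\nabla u\|_p^p,
\]
since $\|u\|_q^q \ge 0$ and $q > p \ge 2 > 0$.

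Next, since $I(u) = 0 \le 0$, Lemma \ref{1lem20}(2) applies and yields $\|\nabla u\|_p \ge r_*$. Combining this with the estimate above, and using $p \ge 2 > 0$ and $q > p$ (so $(q-p)/(pq) > 0$), we obtain
\[
J(u) \ge \frac{q-p}{pq}\|\nabla u\|_p^p \ge \frac{q-p}{pq}r_*^p.
\]
Taking the infimum over $u \in \mathcal{N}$ in the definition \eqref{19add6} of $d$ then gives $d \ge \frac{q-p}{pq} r_*^p$, as required.

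There is no real obstacle here: the whole content is that (i) the Nehari constraint $I(u)=0$ reduces $J(u)$ to the two manifestly nonnegative terms in \eqref{19add5}, and (ii) Lemma \ref{1lem20} supplies the quantitative lower bound $\|\nabla u\|_p \ge r_*$ needed to bound the $\|\nabla u\|_p^p$ term from below uniformly on $\mathcal{N}$. The only subtle point is that Lemma \ref{21add} only gives the strict bound $\|\nabla u\|_p > r(\alpha)$ for each admissible $\alpha$; one must then pass to the supremum $r_* = \sup_\alpha r(\alpha)$ to conclude the (non-strict) bound $\|\nabla u\|_p \ge r_*$, which is exactly what Lemma \ref{1lem20} records and what we need here. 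No further ingredients (in particular no Hardy--Sobolev or logarithmic estimate) are required for this lemma.
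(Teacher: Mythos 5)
Your proof is correct and is exactly the standard argument the paper intends (it omits the proof, citing \cite{DZ2019}, but the Remark after Theorem \ref{thm1} performs the same computation): use \eqref{19add5} with $I(u)=0$ to reduce $J(u)$ to the nonnegative terms, invoke Lemma \ref{1lem20}(2) to get $\|\nabla u\|_p\ge r_*$, and take the infimum over $\mathcal{N}$. No issues.
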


\begin{lemma}\label{20lemadd1}
Let $u$ be a weak solution to problem \eqref{1.1}. Then for all $t \in [0, T)$, 
\begin{equation}
\label{20add1}
\frac{d}{dt}\Big\||x|^{-\frac s2}u\Big\|_2^2=-2I(u).
\end{equation}
\end{lemma}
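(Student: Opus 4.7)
\medskip\noindent\textbf{Proof sketch of Lemma \ref{20lemadd1}.}
The strategy is to take $\phi = u(\cdot,t)$ as the test function in the weak formulation \eqref{19add1} and read off the three resulting terms. Since $u\in L^\infty(0,T;W_0^{1,p}(\Omega))$, the choice $\phi = u(\cdot,t)$ is admissible for almost every $t\in(0,T)$. With this test function:
\begin{align*}
(|\nabla u|^{p-2}\nabla u,\nabla u) &= \|\nabla u\|_p^p,\\
(|u|^{q-2}u\ln|u|,u) &= \int_\Omega |u|^q\ln|u|\,dx,
\end{align*}
so that the spatial part of the weak formulation produces exactly the functional $I(u)$ defined in \eqref{19add4}.

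The nontrivial step is to identify the time term $(|x|^{-s}u_t,u)$ with $\tfrac12\tfrac{d}{dt}\||x|^{-s/2}u\|_2^2$. First, the Hardy--Sobolev inequality (Lemma \ref{lem2.5}) applied to $u(\cdot,t)\in W_0^{1,p}(\Omega)$ with $0\le s\le 2$ gives $|x|^{-s/2}u\in L^\infty(0,T;L^2(\Omega))$. Combined with the regularity $|x|^{-s/2}u_t\in L^2(0,T;L^2(\Omega))$ assumed in Definition \ref{def2.1}, the function $w(t):=|x|^{-s/2}u(\cdot,t)$ lies in $L^\infty(0,T;L^2(\Omega))$ with weak derivative $w'(t)=|x|^{-s/2}u_t\in L^2(0,T;L^2(\Omega))$. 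Therefore $w\in C([0,T];L^2(\Omega))$ and the Hilbert-space chain rule yields, for a.e.\ $t\in(0,T)$,
\[
\tfrac12\tfrac{d}{dt}\||x|^{-s/2}u\|_2^2 \;=\; \int_\Omega |x|^{-s/2}u\cdot |x|^{-s/2}u_t\,dx \;=\; (|x|^{-s}u_t,u).
\]

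Substituting these three identifications into \eqref{19add1} with $\phi=u$ produces
\[
\tfrac12\tfrac{d}{dt}\||x|^{-s/2}u\|_2^2 + \|\nabla u\|_p^p \;=\; \int_\Omega |u|^q\ln|u|\,dx,
\]
which rearranges to the claimed $\tfrac{d}{dt}\||x|^{-s/2}u\|_2^2 = -2I(u)$. The main obstacle is the chain-rule justification for $w(t)$ in the singular weighted $L^2$ space; everything else is an algebraic identification of terms. One could alternatively skirt this by first approximating $u_0$ by smooth functions, running the computation for the corresponding smooth solutions where the pointwise differentiation is routine, and then passing to the limit using the regularity in Definition \ref{def2.1}, but the chain-rule route above is cleaner and self-contained given the stated regularity.
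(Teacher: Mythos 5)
Your proposal is correct and follows exactly the paper's route: the paper disposes of this lemma in one line by taking $\phi=u$ in Definition \ref{def2.1}, which is precisely what you do. Your additional justification of the chain rule for $w(t)=|x|^{-s/2}u(\cdot,t)$ via the stated regularity is a welcome elaboration of a step the paper leaves implicit.
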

The proof of  Lemma \ref{20lemadd1} directly follows by choosing $\phi=u$ in Definition \ref{def2.1}.
 
  \begin{lemma}\label{20lemadd2}
Let \eqref{1.2} hold and $u_0(x)\in W_0^{1,p}(\Omega)$. Assume that $u$ is a weak solution of problem \eqref{1.1} in $\Omega\times[0,T) $.
\begin{enumerate}[$(1)$]
  \item If $J(u_0)<d$ and $I(u_0)>0$, then $u(t)\in \mathcal{W}$ for $0\le t<T$.
  \item If $J(u_0)<d$ and $I(u_0)<0$, then $u(t)\in \mathcal{V}$ for $0\le t<T$.
  \item If $J(u_0)=d$ and $I(u_0)<0$, then there exists $0<t_0<T$ such that $u(t)\in \mathcal{V}$ for $t_0\le t<T$.
\end{enumerate}
\end{lemma}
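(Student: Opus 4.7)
The plan is to combine the energy identity \eqref{19add2} with the barrier property of the Nehari manifold $\mathcal{N}$ (whose energy is at least $d$) and the small-norm criterion of Lemma \ref{1lem20} that keeps trajectories with $I<0$ away from the origin. All three parts then reduce to one of two continuity-plus-contradiction arguments, after one verifies that the weak solution depends continuously on time in $W_0^{1,p}(\Omega)$.

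For part (1), I would first observe that \eqref{19add2} yields $J(u(t))\le J(u_0)<d$ for every $t\in[0,T)$, so the $J$-constraint in $\mathcal{W}$ is preserved for free. Suppose for contradiction that $u(t)\notin\mathcal{W}$ for some $t>0$; by continuity of $u(\cdot)$ in $W_0^{1,p}(\Omega)$ there is a first time $t_1\in(0,T)$ with $I(u(t_1))=0$ and $u(t_1)\ne 0$ (the case $u(t_1)=0$ is excluded because $0\in\mathcal{W}$ and Lemma \ref{1lem20} gives $I>0$ on a $W_0^{1,p}$-neighbourhood of the origin, so $u$ cannot leave $\mathcal{W}$ through $0$). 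Then $u(t_1)\in\mathcal{N}$, and \eqref{19add6} forces $J(u(t_1))\ge d$, contradicting $J(u(t_1))<d$.

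For part (2), again $J(u(t))<d$. Let $t_1$ be the first time with $I(u(t_1))=0$. For $t<t_1$ we have $I(u(t))<0$, so Lemma \ref{1lem20} gives $\|\nabla u(t)\|_p\ge r_*$; letting $t\uparrow t_1$ yields $\|\nabla u(t_1)\|_p\ge r_*>0$, so $u(t_1)\ne 0$ and $u(t_1)\in\mathcal{N}$, producing the same contradiction $J(u(t_1))\ge d>J(u(t_1))$. For part (3), Lemma \ref{20lemadd1} gives $\frac{d}{dt}\||x|^{-s/2}u\|_2^2\big|_{t=0}=-2I(u_0)>0$, so $u$ is not stationary on any right-neighbourhood of $0$. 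Hence $\int_0^{t_0}\||x|^{-s/2}u_\tau\|_2^2\,d\tau>0$ for every small $t_0>0$, and \eqref{19add2} yields $J(u(t_0))<d$; continuity of $I\circ u$ together with $I(u_0)<0$ gives $I(u(t_0))<0$ for all sufficiently small $t_0$, hence $u(t_0)\in\mathcal{V}$. Applying part (2) with $u(t_0)$ as new initial datum then gives $u(t)\in\mathcal{V}$ for $t_0\le t<T$.

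The main obstacle is the continuity of $t\mapsto u(t)$ in $W_0^{1,p}(\Omega)$: the weak-solution class $L^\infty(0,T;W_0^{1,p}(\Omega))$ with $|x|^{-s/2}u_t\in L^2$ does not literally supply this, yet each of the three arguments above uses continuity of $\|\nabla u(t)\|_p$ and of $\int_\Omega |u|^q\ln|u|\,dx$ in $t$. I would upgrade the regularity in the standard parabolic way, using monotonicity of the $p$-Laplacian together with the energy identity to obtain $u\in C([0,T);W_0^{1,p}(\Omega))$, after which continuity of the logarithmic term follows from the Sobolev embedding $W_0^{1,p}(\Omega)\hookrightarrow L^{q+\varepsilon}(\Omega)$ guaranteed by the subcritical condition $q<Np/(N-p)$ in \eqref{1.2}. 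This regularity step is the most delicate piece; everything else is the barrier argument.
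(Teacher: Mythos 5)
Your proposal is correct and follows essentially the same route as the paper: the energy identity \eqref{19add2} preserves $J(u(t))<d$, a first-exit-time continuity argument combined with Lemma \ref{1lem20} keeps the trajectory away from the origin when $I\le 0$, and the definition \eqref{19add6} of $d$ as $\inf_{\mathcal{N}}J$ yields the contradiction; part (3) is reduced to part (2) in both treatments by showing the dissipation integral is strictly positive. The continuity of $t\mapsto u(t)$ in $W_0^{1,p}(\Omega)$ that you flag is used, but not justified, in the paper's proof as well.
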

\begin{proof}
\begin{enumerate}[$(1)$]
  \item For $J(u_0)<d$, $I(u_0)>0$, from the definition of $\mathcal{W}$, we know $u_0\in\mathcal{W}$. Next we will prove $u(t)\in \mathcal{W}$ for $0<t<T$. In fact, if it is false, there exists a $t_0 \in (0,T)$ such that $u(t_0)\in \mathcal{\partial W}$, which  implies that $u(t_0)\in W_0^{1,p}(\Omega)\backslash\{0\}$ and $J(u(t_0))=d$ or $I(u(t_0))$=0. From \eqref{19add2}, $J(u(t_0))=d$ is not true. So $u(t_0)\in \mathcal{N}$, then by the definition of $d$, we have $J(u(t_0)) \ge d$, which also contradicts \eqref{19add2}. Hence, $u(t) \in \mathcal{W}$ for $0<t<T$.    
 \item By the definition of $\mathcal{V}$,  we have $u_0\in \mathcal{V}$. Next we will show that $u(t)\in \mathcal{V}$ for $0< t<T$. If not, there exist
a $t^0\in(0, T)$ such that $u(t^0)\in\partial \mathcal{V}$, namely
$I(u(t^0))=0$ or $J(u(t^0))=d.$ 
By \eqref{19add2}, we can see that $J(u(t^0))<d$, then $I(u(t^0))=0$. We assume that $t^0$ is the first time such that
$I(u(t^0))=0$, then $I(u(t))<0$ for $0\le t<t^0$. 
Recalling Lemma \ref{1lem20}, one has $
\|\nabla u\|_p\ge  r_*\text{ for }0\le  t<t^0.$ 
Thus, 
\begin{align*}
\|\nabla u(t^0)\|_p=\lim\limits_{t\rightarrow t^{0}}\|\nabla u\|_p\geq r_*>0,
\end{align*} 
which together with $I(u(t^0))=0$ implies that $u(t^0)\in \mathcal{N}$.
By the definition of $d$, we again obtain $J(u(t^0))\geq d$, a contradiction to \eqref{19add2}. 
\item Firstly, we claim that $I(u(x,t))<0$ for $0\le t<\infty$ if $J(u_0)=d$ and $I(u_0)<0$. If the claim is not true, then by the continuity of $I(u(x,t))$,  we assume that $t_0$ is the first time such that
$I(u(t_0))=0$, then $I(u(t))<0$ for $0\le  t<t_0$.  Recalling Lemma \ref{1lem20}, one has $
\|\nabla u\|_p\ge  r_*\text{ for }0\le  t<t_0.$ 
Thus, 
\begin{align*}
\|\nabla u(t_0)\|_p=\lim\limits_{t\rightarrow t_{0}}\|\nabla u\|_p\geq r_*>0,
\end{align*} 
which together with $I(u(t_0))=0$ implies that $u(t_0)\in \mathcal{N}$. Thus, by the definition of $d$, we get 
\begin{equation}
\label{20b}
J(u(x,t_0))\ge d.
\end{equation} 
On the other hand,  it follows from \eqref{20add1} that 
\begin{equation*}
\frac{d}{dt}\Big\||x|^{-\frac s2}u\Big\|_2^2=-2I(u(t))>0\quad \text{for }0\leq t<t_0.
\end{equation*}
Therefore, \eqref{19add2} yields 
\begin{equation}
\label{20c}
J(u(x,t))\le J(u(x,t_0))=J(u_0)-\int_0^{t_0}\Big\||x|^{-\frac s2}u_t\Big\|_2^2dt<d\quad \text{for }0\leq t<t_0,
\end{equation}
which contradicts \eqref{20b}. Taking $t_0$ as the initial time, and following the proof of case (2), we can prove that $u(t)\in \mathcal{V}$ for $t_0\le t<T$.
\end{enumerate}
\end{proof}

 \begin{lemma}\label{addlem2.4}
\upcite{LL2017}Let $\mu$ be a positive number. Then we have the following inequalities:
\[s^{p}\ln s\le\frac{e^{-1}}{\mu}s^{p+\mu}\quad\text{for all }s\ge 1,\]
and
\[\Big|s^{p}\ln s\Big|\le (ep)^{-1}\quad\text{for all }0<s<1.\]
\end{lemma}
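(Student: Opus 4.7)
The plan is to establish both inequalities by elementary one-variable calculus, in each case reducing the problem to locating a unique critical point of a smooth function and evaluating the extremum there.

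For the first inequality, since $s>0$ I would divide through by $s^p$ and show the equivalent estimate $\ln s\le\frac{e^{-1}}{\mu}s^{\mu}$ for $s\ge 1$. Setting $t=s^{\mu}$, so that $t\ge 1$, this reduces to the standard bound $\ln t\le t/e$. I would then introduce $h(t):=t/e-\ln t$, compute $h'(t)=1/e-1/t$, observe the unique critical point at $t=e$, verify $h(e)=0$, and conclude that $h(t)\ge 0$ on $(0,\infty)$ since $h$ is decreasing on $(0,e]$ and increasing on $[e,\infty)$. Unwinding the substitution gives the claim; the factor $1/\mu$ appears automatically from the exponent change.

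For the second inequality, since $\ln s<0$ on $(0,1)$ we have $|s^{p}\ln s|=-s^{p}\ln s$. I would set $g(s):=-s^{p}\ln s$ on $(0,1)$, compute $g'(s)=-s^{p-1}(p\ln s+1)$, and find the unique interior critical point $s_{*}:=e^{-1/p}\in(0,1)$. Because $g(s)\to 0$ as $s\to 0^{+}$ (the polynomial factor beats the logarithm) and $g(1)=0$, the critical value $s_{*}$ must realize the maximum of $g$ on $[0,1]$. A direct evaluation yields $g(s_{*})=-e^{-1}\cdot(-1/p)=(ep)^{-1}$, which is precisely the stated bound.

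There is no substantive obstacle: both arguments are routine maximizations of smooth single-variable functions. The only care required is the verification of the endpoint behavior to certify that the interior critical point is the global extremum on the relevant half-line or interval; once that is noted, the numerical values $1/e$ and $1/(ep)$ fall out immediately from direct substitution.
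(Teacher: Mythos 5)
Your proof is correct. The paper states this lemma without proof (it is quoted from \cite{LL2017}), and your elementary calculus argument --- reducing the first bound via $t=s^{\mu}$ to the standard inequality $\ln t\le t/e$, and maximizing $-s^{p}\ln s$ at the interior critical point $s=e^{-1/p}$ for the second --- is the standard derivation and establishes both inequalities completely.
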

\begin{lemma}\label{lem2.4}
\upcite{L1973,LG2017}Suppose a positive, twice-differentiable function $\psi (t)$ satisfies the inequality
\begin{equation*}
\psi''(t)\psi(t)-(1+\theta)({\psi'}(t))^2\ge 0\quad\text{for }t\ge t_0\ge 0,
\end{equation*}
where $\theta>0.$ If $\psi(t_0)>0,~\psi'(t_0)>0$, then $\psi(t)\to\infty$ as $t\to t_1\le t_2=\frac{\psi(t_0)}{\theta\psi'(t_0)}+t_0$.
\end{lemma}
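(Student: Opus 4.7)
The plan is to derive a differential inequality of the form $L'(t)\le C\,L^{\kappa}(t)$ with $\kappa>1$, and then invert it to get a lower bound on $T$. Since by Theorem~\ref{thm3} the solution blows up in the sense of Definition~\ref{def2.2}, i.e.\ $L(t)\to+\infty$ as $t\to T^-$, separating variables in $L^{-\kappa}L'\le C$ and integrating from $0$ to $T$ will give $T\ge L^{1-\kappa}(0)/[C(\kappa-1)]$, which is exactly the shape of the bound to be established.

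First I would use Lemma~\ref{20lemadd1} to write $L'(t)=-I(u)=-\|\nabla u\|_p^p+\int_\Omega |u|^q\ln|u|\,dx$, deliberately keeping the $-\|\nabla u\|_p^p$ term so that a gradient factor can be absorbed later. To control the logarithmic integral I would apply the first estimate of Lemma~\ref{addlem2.4} with $\mu=\alpha$ on the set $\{|u|\ge 1\}$ (where $\ln|u|\ge 0$), and simply discard the negative contribution from $\{|u|<1\}$, obtaining
\begin{equation*}
\int_\Omega |u|^q\ln|u|\,dx\le \frac{1}{\alpha}\,\|u\|_{q+\alpha}^{q+\alpha},
\end{equation*}
where the condition $\alpha\le Np/(N-p)-q$ from \eqref{20e} guarantees the needed embedding. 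Next, the Gagliardo--Nirenberg interpolation between $L^2(\Omega)$ and $L^{Np/(N-p)}(\Omega)$, combined with the Sobolev embedding $W_0^{1,p}(\Omega)\hookrightarrow L^{Np/(N-p)}(\Omega)$ with constant $C_*$, yields
\begin{equation*}
\|u\|_{q+\alpha}^{q+\alpha}\le C_*^{\theta(q+\alpha)}\,\|\nabla u\|_p^{\theta(q+\alpha)}\,\|u\|_2^{(1-\theta)(q+\alpha)},
\end{equation*}
with $\theta$ exactly the interpolation exponent specified in the theorem. The subcritical hypothesis $q+\alpha<p(1+2/N)$ translates, after a short calculation, into $\theta(q+\alpha)<p$, which is what makes the next step possible.

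The key absorption step is Young's inequality with conjugate exponents $r=p/[\theta(q+\alpha)]$ and $r'=p/[p-\theta(q+\alpha)]$, where the weight is chosen so that the $\|\nabla u\|_p^p$ produced by Young exactly cancels the $-\|\nabla u\|_p^p$ coming from $-I(u)$. The outcome is
\begin{equation*}
L'(t)\le \alpha^{-r'}\,C_*^{\theta(q+\alpha)r'}\,\|u\|_2^{(1-\theta)(q+\alpha)r'},
\end{equation*}
in which the $\alpha^{-1}$ from the logarithmic bound and the Young weight combine into the single $\alpha^{-r'}=\alpha^{-1}\alpha^{-r'\theta(q+\alpha)/p}$ written as $\alpha^{-1}\cdot\alpha^{-\theta(q+\alpha)/p}$ raised to $r'$ in the statement. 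Then I would convert the $L^2$-norm into a weighted one by the elementary bound $\|u\|_2^2=\int_\Omega|x|^s\,|x|^{-s}|u|^2\,dx\le[\text{diam}(\Omega)]^s\,\bigl\|\,|x|^{-s/2}u\,\bigr\|_2^2=2[\text{diam}(\Omega)]^s\,L(t)$, raised to the power $(1-\theta)(q+\alpha)/2$. This introduces the $[\text{diam}(\Omega)]^{s(1-\theta)(q+\alpha)/2}$ and the $2^\kappa$ factor, and converts the right-hand side into $C\,L^{\kappa}(t)$ with $\kappa=(1-\theta)(q+\alpha)r'/2$, which one checks coincides with the exponent in the statement.

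The main obstacle, in my view, is not analytic but bookkeeping: one must verify that the algebraic identity $-1-\theta(q+\alpha)r'/p=-r'$ combines the $\alpha^{-1}$ from the logarithmic lemma with the weight $\alpha^{-\theta(q+\alpha)/p}$ from Young's into the single exponent displayed in the theorem, and that the hypothesis $q+\alpha<p(1+2/N)$ simultaneously secures $\theta(q+\alpha)<p$ (so that Young's inequality can absorb $\|\nabla u\|_p^{\theta(q+\alpha)}$ into $\|\nabla u\|_p^p$) and $\kappa>1$ (so that integrating $dL/L^{\kappa}\le C\,dt$ over $[0,T)$ with $L(T^-)=+\infty$ produces the finite lower bound $T\ge L^{1-\kappa}(0)/[C(\kappa-1)]$). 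Once these two exponent checks are in place, the final estimate follows by the one-line separation-of-variables computation mentioned at the outset.
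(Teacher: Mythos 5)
Your proposal does not address the statement at hand. Lemma \ref{lem2.4} is the abstract Levine concavity lemma: it concerns an arbitrary positive, twice-differentiable function $\psi$ satisfying $\psi''\psi-(1+\theta)(\psi')^2\ge 0$, and asserts blow-up of $\psi$ by the time $t_2=\frac{\psi(t_0)}{\theta\psi'(t_0)}+t_0$. What you have written is instead a proof sketch of Theorem \ref{thm4} (the lower bound on the blow-up time via $L'(t)\le C L^{\kappa}(t)$, interpolation, and the Sobolev embedding). Nothing in your argument mentions $\psi$, the hypothesis $\psi''\psi-(1+\theta)(\psi')^2\ge 0$, or the claimed bound $t_2$; the differential inequality you derive, $L'\le CL^{\kappa}$, points in the opposite direction (it gives a \emph{lower} bound on an already-known blow-up time, whereas the lemma \emph{produces} blow-up and an \emph{upper} bound on when it occurs). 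So the proposal is a proof of the wrong statement, and as a proof of Lemma \ref{lem2.4} it is vacuous.

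For the record, the standard argument (this lemma is quoted in the paper from \cite{L1973,LG2017} without proof) is short: set $\phi(t):=\psi(t)^{-\theta}$. Then
\begin{equation*}
\phi'(t)=-\theta\,\psi^{-\theta-1}\psi',\qquad
\phi''(t)=-\theta\,\psi^{-\theta-2}\bigl[\psi''\psi-(1+\theta)(\psi')^2\bigr]\le 0,
\end{equation*}
so $\phi$ is concave on $[t_0,\infty)$ with $\phi(t_0)=\psi(t_0)^{-\theta}>0$ and $\phi'(t_0)=-\theta\,\psi(t_0)^{-\theta-1}\psi'(t_0)<0$. Concavity gives $\phi(t)\le \phi(t_0)+\phi'(t_0)(t-t_0)$, whose right-hand side vanishes at $t=t_0+\frac{\phi(t_0)}{-\phi'(t_0)}=t_0+\frac{\psi(t_0)}{\theta\psi'(t_0)}=t_2$. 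Since $\phi>0$ wherever $\psi$ is finite, $\phi$ must tend to $0$, and hence $\psi\to\infty$, at some $t_1\le t_2$. If you intended to prove Theorem \ref{thm4}, your outline is essentially the paper's argument for that theorem; but it cannot be accepted as a proof of Lemma \ref{lem2.4}.
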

\begin{lemma}\label{lem2.5}
\upcite{BT2002}$($Hardy-Sobolev inequality$)$ Let $\mathbb{R}^N=\mathbb{R}^k\times\mathbb{R}^{N-k}$, $2\le k\le N$ and $x=(y,z)\in \mathbb{R}^N=\mathbb{R}^k\times\mathbb{R}^{N-k}$. For given $n,~\beta$ satisfying $1<n<N$, $0\le \beta\le n$ and $\beta<k$, let $\gamma(\beta,N,n)=n(N-\beta)/(N-n)$. Then there exists a positive constant $C$ depending on $\beta,~n,~N$ and $k$ such that for any $u\in W_0^{1,n}(\mathbb{R}^N)$, it holds
\[\int_{\mathbb{R}^N}\frac{|u(x)|^\gamma}{|y|^\beta}dx\le C\Big(\int_{\mathbb{R}^N}|\nabla u|^ndx\Big)^{\frac{N-\beta}{N-n}}.\]
\end{lemma}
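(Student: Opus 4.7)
The plan is to establish Lemma \ref{lem2.5} by interpolating between the classical Sobolev embedding $W_0^{1,n}(\mathbb{R}^N)\hookrightarrow L^{n^*}(\mathbb{R}^N)$, with $n^*=Nn/(N-n)$, and a partial (cylindrical) Hardy inequality with weight $|y|^{-n}$, in the spirit of Caffarelli--Kohn--Nirenberg. A preliminary scaling check using $u_\lambda(x)=u(\lambda x)$ shows that $\gamma=n(N-\beta)/(N-n)$ is the only exponent for which both sides of the proposed inequality transform identically under dilations, so this is the correct target exponent. By density of $C_c^\infty(\mathbb{R}^N)$ in $W_0^{1,n}(\mathbb{R}^N)$, it suffices to argue for $u\in C_c^\infty(\mathbb{R}^N)$.

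The first main step is the cylindrical Hardy inequality
\[
\int_{\mathbb{R}^N}\frac{|u|^n}{|y|^n}\,dx\le C_H\int_{\mathbb{R}^N}|\nabla u|^n\,dx,
\]
which I would obtain by freezing $z\in\mathbb{R}^{N-k}$ and applying the classical one-variable Hardy inequality on $\mathbb{R}^k$ to the slice $y\mapsto u(y,z)$; this is legitimate precisely because the hypothesis $\beta<k$ combined with the boundary choice $\beta=n$ forces $n<k$, so that the Hardy constant $\bigl(n/(k-n)\bigr)^n$ is finite. Integrating in $z$ and using $|\nabla_y u|\le|\nabla u|$ yields the displayed inequality. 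The second step is a Hölder interpolation: noting the algebraic identity $\gamma-\beta=n^*(n-\beta)/n$, I would write
\[
\frac{|u|^\gamma}{|y|^\beta}=\left(\frac{|u|^n}{|y|^n}\right)^{\beta/n}\cdot|u|^{n^*(n-\beta)/n},
\]
and apply Hölder's inequality with conjugate exponents $n/\beta$ and $n/(n-\beta)$ (the degenerate case $\beta=0$ reducing directly to Sobolev).

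The two resulting factors are then estimated by the cylindrical Hardy inequality of Step~1 and by the sharp Sobolev inequality $\|u\|_{n^*}\le C_S\|\nabla u\|_n$, respectively. A short arithmetic check shows that the total power of $\|\nabla u\|_n$ produced is
\[
\beta+\frac{n^*}{n}(n-\beta)=\frac{\beta(N-n)+N(n-\beta)}{N-n}=\frac{n(N-\beta)}{N-n}=\gamma,
\]
which is $(N-\beta)/(N-n)$ when expressed as a power of $\|\nabla u\|_n^n$, matching the stated right-hand side. The main obstacle is the cylindrical Hardy inequality in the critical regime $\beta=n$, where one genuinely needs $n<k$ strictly; the hypothesis $\beta<k$ in the lemma is exactly what guarantees this. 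For the strictly subcritical range $0\le\beta<n$ the interpolation argument is cleaner and can even be done without invoking Hardy, by combining Sobolev with a direct Hölder estimate on $|y|^{-\beta}$, but the unified Hardy-plus-Sobolev route recovers the sharp exponent $\gamma$ in all admissible cases and reproduces the constant $C$ of \cite{BT2002}.
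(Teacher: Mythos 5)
The paper offers no proof of this lemma --- it is quoted directly from Badiale--Tarantello \cite{BT2002} --- so your proposal is an independent reconstruction rather than something to match against an argument in the text. The skeleton is sound where it applies: the scaling check identifying $\gamma$, the identity $\gamma-\beta=n^*(n-\beta)/n$, the H\"older split with conjugate exponents $n/\beta$ and $n/(n-\beta)$, and the final exponent count $\beta+\frac{n^*}{n}(n-\beta)=\gamma$ are all correct. In the regime $n<k$ --- in particular for $k=N$, which is the only case the paper actually invokes (there $n=\frac{2N}{N+2-s}<N=k$) --- your Hardy-plus-Sobolev interpolation is a complete proof.

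As a proof of the lemma as stated, however, there is a genuine gap. Your H\"older step produces, for \emph{every} $\beta\in(0,n]$, the factor $\bigl(\int_{\mathbb{R}^N}|u|^n|y|^{-n}\,dx\bigr)^{\beta/n}$, and this cylindrical Hardy term is finite only when $n<k$: the weight $|y|^{-n}$ fails to be locally integrable on $\mathbb{R}^k_y$ once $n\ge k$, so the slice-wise Hardy inequality you invoke simply does not hold there. The hypotheses of the lemma guarantee only $\beta<k$, not $n<k$, and your remark that ``$\beta<k$ combined with the boundary choice $\beta=n$ forces $n<k$'' covers only the endpoint $\beta=n$, whereas the critical weight $|y|^{-n}$ is used throughout the range. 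Concretely, take $N=5$, $k=2$, $n=3$, $\beta=1$, $\gamma=6$: all hypotheses hold and the conclusion is true, but $\int|u|^3|y|^{-3}\,dx=+\infty$ for any $u\in C_c^\infty(\mathbb{R}^5)$ not vanishing on $\{y=0\}$, so your chain reads ``finite $\le\infty\cdot(\cdot)$''. (Your fallback of a ``direct H\"older estimate on $|y|^{-\beta}$'' does not rescue this either: any such estimate introduces a support-dependent constant and breaks the scale invariance you verified at the outset.) The standard way to close the full range $0\le\beta\le n$, $\beta<k$ is to avoid the $|y|^{-n}$ weight entirely: use $\mathrm{div}_y\bigl(y|y|^{-\beta}\bigr)=(k-\beta)|y|^{-\beta}$ --- this is precisely where $\beta<k$ enters --- integrate by parts to bound $\int|u|^\gamma|y|^{-\beta}\,dx$ by $\frac{\gamma}{k-\beta}\int|u|^{\gamma-1}|\nabla u|\,|y|^{1-\beta}\,dx$, and then close with a H\"older splitting in which a power of the left-hand side reappears and is absorbed, together with the Sobolev inequality; this is essentially the route of \cite{BT2002}.
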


 \section{Proofs of the main results}
 \begin{proof1} The proof will be divided into two steps.
 
 \textsc{Step 1. Global existence} 
 Let $\{\phi_j(x)\}$ be a system of basis in $W_0^{1,p}(\Omega)$ which is orthogonal in $L^2(\Omega)$ and construct the approximate solution $u^m(x,t)$ to problem \eqref{1.1}
 \[u^m(x,t)=\sum_{j=1}^{m}a_j^m(t)\phi_j(x)\quad \text{for }m=1,2,\cdots,\]
which satisfy for $j=1,2,\cdots,m$
\begin{equation}
\label{4.1}
(|x|^{-s}u^m_t,\phi_j)+(|\nabla u^m|^{p-2}\nabla u^m,\nabla\phi_j)=(|u^m|^{q-2}u^m\ln|u^m|,\phi_j),
\end{equation}
\begin{equation}
\label{4.2}
u^m(x,0)=\sum_{j=1}^{m}b_j^m\phi_j(x)\to u_0(x)\text{ in }W_0^{1,p}(\Omega).
\end{equation}
The standard theory of ordinary differential equations yields that there exists a $T>0$ depending on $b_j^m$ for $j=1,2,\cdots,m$ such that $a_j^m(t)\in C^1([0,T])$ and $a_j^m(0)=b_j^m$. As a consequence, $u^m\in C^1([0,T],W_0^{1,p}(\Omega)).$

It follows by multiplying \eqref{4.1} by  $(a_j^m(t))'$, summing for $j$ from $1$ to $m$, and integrating from $0$ to $t$  that 
\[\int_0^t\Big\||x|^{-\frac s2}u_\tau^m\Big\|_2^2d\tau+J(u^m(x,t))=J(u^m(x,0))\quad\text{for }0\le t<T.\]
Noticing that $u^m(x,0)\to u_0(x)$ in $W_0^{1,p}(\Omega)$, we get
\[J(u^m(x,0))\to J(u_0(x))<d\quad\text{and }I(u^m(x,0))\to I(u_0(x))>0.\]
Therefore, for sufficiently large $m$ and for any $0\le t<T$, one has
\begin{equation}
\label{4.3}
\int_0^t\Big\||x|^{-\frac s2}u_\tau^m\Big\|_2^2d\tau+J(u^m(x,t))=J(u^m(x,0))<d\quad\text{and }I(u^m(x,0))>0.
\end{equation}

Now, let us prove that $u^m(x,t)\in \mathcal{W}$ for sufficiently large $m$ and for $0\le t<T$. Otherwise, there exists a $t_0\in (0,T)$ such that $u^m(x,t_0)\in \partial\mathcal{W}$. Recalling  that $0$ is an interior point of $\mathcal{W}$, it follows that 
\[(1)~I(u^m(x,t_0))=0,~\|\nabla u^m(x,t_0)\|_p\neq 0\quad \text{or }(2)~J(u^m(x,t_0))=d.\]
It follows from \eqref{4.3} that $J(u^m(x,t_0))<d$, which implies that (2) cannot happen. If (1) happens, then by the definition of $d$, we get $J(u^m(x,t_0))\ge d$, which contradicts \eqref{4.3}.

Since $u^m(x,t)\in \mathcal{W}$ for sufficiently large $m$ and for $0\le t<T$, one has $I(u^m(x,t))\ge 0$. Combining 
\[J(u^m)=\frac1qI(u^m)+\frac{q-p}{pq}\|\nabla u^m\|_p^p+\frac{1}{q^2}\|u^m\|_q^q\]
and \eqref{4.3} is to get
\[\int_0^t\Big\||x|^{-\frac s2}u^m_\tau\Big\|_2^2d\tau+\frac{q-p}{pq}\|\nabla u^m\|_p^p+\frac{1}{q^2}\|u^m\|_q^q< d\]
for sufficiently large $m$ and for $0\le t<T$, which indicates  that 
\begin{equation}
\label{4.4}
\|u^m\|_{W_0^{1,p}(\Omega)}^p< \frac{dpq}{q-p},
\end{equation}
\begin{equation}
\label{4.5}
\int_0^t\Big\||x|^{-\frac s2}u^m_\tau\Big\|_2^2d\tau< d,
\end{equation}
\begin{equation}
\label{4.6}
\int_\Omega\Big||\nabla u^m|^{p-2}\nabla u^m\Big|^{\frac{p}{p-1}}dx=\|\nabla u^m\|_p^p< \frac{dpq}{q-p}.
\end{equation}
Applying Lemma  \ref{addlem2.4} yields 
\begin{equation}
\label{4.7}
\begin{split}
&\int_\Omega \Big||u^m|^{q-2}u^m\ln|u^m|\Big|^{\frac{q}{q-1}}dx\\
&\quad=\int_{\{x\in\Omega;|u^m|\ge1\}} \Big||u^m|^{q-2}u^m\ln|u^m|\Big|^{\frac{q}{q-1}}dx+\int_{\{x\in\Omega;|u^m|<1\}} \Big||u^m|^{q-2}u^m\ln|u^m|\Big|^{\frac{q}{q-1}}dx\\
&\quad\le (e\mu)^{-\frac{q}{q-1}}\int_{\{x\in\Omega;|u^m|\ge1\}} |u^m|^{(q-1+\mu)\frac{q}{q-1}}dx+(e(q-1))^{-\frac{q}{q-1}}|\Omega|\\
&\quad\le (e\mu)^{-\frac{q}{p-1}}(C^*)^{(q-1+\mu)\frac{q}{q-1}}\|u^m\|_{W_0^{1,p}(\Omega)}^{(q-1+\mu)\frac{q}{q-1}}+(e(q-1))^{-\frac{q}{q-1}}|\Omega|\\
&\quad< (e\mu)^{-\frac{q}{p-1}}(C^*)^{(q-1+\mu)\frac{q}{q-1}}\Big(\frac{dpq}{q-p}\Big)^{(q-1+\mu)\frac{q}{p(q-1)}}+(e(q-1))^{-\frac{q}{q-1}}|\Omega|,
\end{split}
\end{equation}
here we choose $\mu$ such that $(q-1+\mu)\frac{q}{q-1}<\frac{Np}{N-p}$, and $C^*$ is the optimal embedding constant of $W_0^{1,p}(\Omega)\hookrightarrow L^{(q-1+\mu)\frac{q}{q-1}}(\Omega)$.

As a consequence, by \eqref{4.4}-\eqref{4.7},  $T=\infty$.  And for any $\widetilde{T}>0$ we get a subsequence of 
$\{u^m\}$ (still denoted by $\{u^m\}$) such that as $m\to \infty$,
\begin{equation}
\label{4.8}
\begin{cases}
u^m\rightharpoonup u\text{ weakly * in }L^\infty(0,\widetilde{T};W_0^{1,p}(\Omega));\\
|x|^{-\frac s2}u^m_t \rightharpoonup |x|^{-\frac s2}u_t \text{ weakly  in }L^2(0,\widetilde{T};L^2(\Omega));\\
|\nabla u^m|^{p-2}\nabla u^m\rightharpoonup \xi\text{ weakly * in }L^\infty(0,\widetilde{T};L^{\frac{p}{p-1}}(\Omega));\\
|u^m|^{q-2}u^m\ln|u^m|\rightharpoonup \eta\text{ weakly * in }L^\infty(0,\widetilde{T};L^{\frac{q}{q-1}}(\Omega)).
\end{cases}
\end{equation}
Since $\Omega$ is a bounded domain, then
\[\int_0^t\|u^m_\tau\|_2^2d\tau\le (diam(\Omega))^{s}\int_0^t\Big\||x|^{-\frac s2}u^m_\tau\Big\|_2^2d\tau\le (diam(\Omega))^{s}d,\]
which indicates 
\begin{equation}
\label{4.9}
u_t^m\rightharpoonup u_t \text{ weakly in }L^2(0,\widetilde{T};L^2(\Omega)).
\end{equation}
Combining \eqref{4.9} and the first one in \eqref{4.8}, noticing that $W_0^{1,p}(\Omega)\stackrel{compact}{\hookrightarrow} L^q(\Omega)\hookrightarrow L^2(\Omega)$, and then using the Aubin-Lions-Simon compactness lemma \cite{A1963,L1969,S1987}, one has
\begin{equation}
\label{4.10}
u^m\to u \text{ strongly in }C([0,\widetilde{T}];L^q(\Omega)).
\end{equation}
Thus, $u^m\to u$ a.e. $x\in \Omega$, and then $\eta=|u|^{q-2}u\ln|u|$. Recalling \eqref{4.2}, one has $u(x,0)=u_0(x)\in W_0^{1,p}(\Omega).$

We are now in a position to prove that $u$ is a weak solution to problem \eqref{1.1} for any $\widetilde{T}>0$. Let us fix an integer $k$ and choose a function $\omega\in C^1([0,\widetilde{T}],W_0^{1,p}(\Omega))$ with the following 
\[\omega=\sum_{j=1}^kl_j(t)\phi_j(x),\]
where $\{l_j(t)\}_{j=1}^k$ are arbitrarily  given $C^1$ functions. Taking $m \ge k$ in \eqref{4.1}, multiplying \eqref{4.1} by $l_j(t)$,
summing for $j$ from $1$ to $k$, and integrating with respect to $t$ from 0 to $\widetilde{T}$, we obtain
\begin{equation}
\label{4.11}
\int_0^{\widetilde{T}}(|x|^{-s}u^m_t,\omega)dt+\int_0^{\widetilde{T}}(|\nabla u^m|^{p-2}\nabla u^m,\nabla \omega)dt=\int_0^{\widetilde{T}}(|u^m|^{q-2}u^m\ln|u^m|,\omega)dt,
\end{equation}
Particularly, one has
\begin{equation*}
\int_0^{\widetilde{T}}(|x|^{-s}u^m_t,u^m)dt+\int_0^{\widetilde{T}}(|\nabla u^m|^{p-2}\nabla u^m,\nabla u^m)dt=\int_0^{\widetilde{T}}(|u^m|^{q-2}u^m\ln|u^m|,u^m)dt,
\end{equation*}
that is 
\begin{equation}
\label{4.12}
\frac12\int_\Omega|x|^{-s}(u^m(x,\widetilde{T}))^2dx-\frac12\int_\Omega|x|^{-s}(u^m(x,0))^2dx+\int_0^{\widetilde{T}}\|\nabla u^m\|^p_pdt=\int_0^{\widetilde{T}}\int_\Omega|u^m|^q\ln|u^m|dxdt.
\end{equation}
Letting $m\to \infty$ in \eqref{4.11}, and noticing that 
\[\int_0^{\widetilde{T}}(|x|^{-s}u^m_t,\omega)dt=\int_0^{\widetilde{T}}(|x|^{-\frac s2}u^m_t,|x|^{-\frac s2}\omega)dt\to \int_0^{\widetilde{T}}(|x|^{-\frac s2}u_t,|x|^{-\frac s2}\omega)dt=\int_0^{\widetilde{T}}(|x|^{-s}u_t,\omega)dt,\]
we obtain 
\begin{equation}
\label{4.13}
\int_0^{\widetilde{T}}(|x|^{-s}u_t,\omega)dt+\int_0^{\widetilde{T}}(\xi,\nabla\omega)dt=\int_0^{\widetilde{T}}(|u|^{q-2}u\ln|u|,\omega)dt.
\end{equation}
 Choosing $\omega=u^m$ in \eqref{4.13}, and letting $m\to \infty$, one has
 \begin{equation}
\label{4.14}
\frac12\int_\Omega|x|^{-s}(u(x,\widetilde{T}))^2dx-\frac12\int_\Omega|x|^{-s}(u_0(x))^2dx+\int_0^{\widetilde{T}}(\xi,\nabla u)dt=\int_0^{\widetilde{T}}(|u|^{q-2}u\ln|u|,u)dt.
\end{equation}

 We need to prove that $\xi=|\nabla u|^{p-2}\nabla u$. Using Hardy-Sobolev inequality(i.e. Lemma \ref{lem2.5}), we have
 \begin{equation}
\label{4.15}
\Big\||x|^{-\frac s2}u^m\Big\|_2^2=\int_\Omega |x|^{-s}(u^m)^{2}dx\le C\Big(\int_\Omega |\nabla u^m|^{\frac{2N}{N+2-s}}dx\Big)^{\frac{N+2-s}{N}}.
\end{equation}
Taking that $\frac{2N}{N+2-s}\le p$ and $p\ge 2$ in mind, it follows from inequality  $s\le s^\alpha+1$ with $s\ge 0,~\alpha\ge 1$ and \eqref{4.4} that  
\begin{equation}
\label{4.16}
C\Big(\int_\Omega |\nabla u^m|^{\frac{2N}{N+2-s}}dx\Big)^{\frac{N+2-s}{N}}\le \widetilde{C}\|\nabla u^m\|_p^2\le \widetilde{C}\|\nabla u^m\|_p^p+\widetilde{C}\le \widetilde{C}\frac{dpq}{q-p}+\widetilde{C}
\end{equation}
with 
\begin{equation}
\label{20h}
\widetilde{C}=\begin{cases}
C&\quad\text{if } \frac{2N}{N+2-s}= p,\\
C|\Omega|^{\frac{N+2-s}{N}-\frac{2}{p}}&\quad\text{if }\frac{2N}{N+2-s}< p.
\end{cases}
\end{equation}
Combining \eqref{4.15} with \eqref{4.16} yields that there exist a subsequence of $\{|x|^{-\frac s2}u^m(x,\widetilde{T})\}$ (which we still denote by $\{|x|^{-\frac s2}u^m(x,\widetilde{T})\}$) and a function $v\in L^2(\Omega)$
such that $\{|x|^{-\frac s2}u^m(x,\widetilde{T})\}\rightharpoonup v$ weakly in $L^2(\Omega)$. Then for any $\psi(x)\in C_0^\infty (\Omega)$ and $\varphi(t)\in C^1([0, \widetilde{T}])$, one has
\begin{equation*}
\begin{split}
\int_0^{\widetilde{T}}\int_\Omega|x|^{-\frac s2}u_t^m\psi(x)\varphi(t)dxdt&=\int_\Omega\Big[|x|^{-\frac s2}u^m(x,\widetilde{T})\varphi(\widetilde{T})-|x|^{-\frac s2}u^m(x,0)\varphi(0)\Big]\psi(x)dx\\
&\quad-\int_0^{\widetilde{T}}\int_\Omega|x|^{-\frac s2}u^m\psi(x)\varphi_t(t)dxdt.
\end{split}
\end{equation*}
Letting $m\to \infty$ in the equality above, and taking \eqref{4.8} \eqref{4.2} in mind, we obtain 
\begin{equation*}
\int_\Omega \Big[v-|x|^{-\frac s2}u(x,\widetilde{T})\Big]\psi(x)\varphi(\widetilde{T})dx-\int_\Omega \Big[|x|^{-\frac s2}u_0(x)-|x|^{-\frac s2}u(x,0)\Big]\psi(x)\varphi(0)dx=0.
\end{equation*}
Setting $\varphi(\widetilde{T})=1$, $\varphi(0)=0$, and by the density of $C_0^\infty (\Omega)$ in $L^2(\Omega)$, we have $v=|x|^{-\frac s2}u(x,\widetilde{T})$. It follows from the weakly lower semi-continuity of the $L^2(\Omega)$ norm that 
\begin{equation}
\label{4.17}
\int_\Omega|x|^{-s}(u(x,\widetilde{T})^2dx\le \liminf_{m\to \infty}\int_\Omega|x|^{-s}(u^m(x,\widetilde{T})^2dx.
\end{equation}
Next, we claim that
\begin{equation}
\label{4.18}
\lim_{m\to \infty}\int_0^{\widetilde{T}}\int_\Omega|u^m|^q\ln|u^m|dxdt=\int_0^{\widetilde{T}}\int_\Omega|u|^q\ln|u|dxdt.
\end{equation}
Using \eqref{4.10}, \eqref{4.7} and the last one in \eqref{4.8}, one has that 
\begin{equation*}
\begin{split}
&\Big|\int_0^{\widetilde{T}}\int_\Omega\Big(|u^m|^q\ln|u^m|-|u|^q\ln|u|\Big)dxdt\Big|
\\&\quad=\Big|\int_0^{\widetilde{T}}\int_\Omega(u^m-u)u^m|u^m|^{q-2}\ln|u^m|dxdt\Big|\\
&\quad\quad+
\Big|\int_0^{\widetilde{T}}\int_\Omega u\Big(|u^m|^{q-2}u^m\ln|u^m|-|u|^{q-2}u\ln|u|\Big)dxdt\Big|\\
&\quad\le \max_{0\le t\le\widetilde{T}}\|u^m-u\|_q\max_{0\le t\le \widetilde{T}}\Big\|u^m|^{q-2}u^m\ln|u^m|\Big\|_{\frac{q}{q-1}}\\
&\quad\quad+\Big|\int_0^{\widetilde{T}}\int_\Omega u\Big(|u^m|^{q-2}u^m\ln|u^m|-|u|^{q-2}u\ln|u|\Big)dxdt\Big|\to 0\quad\text{as }m\to \infty,
\end{split}
\end{equation*}
which yields \eqref{4.18}.

Set 
\[X_m=\int_0^{\widetilde{T}}\int_\Omega(|\nabla u^m|^{p-2}\nabla u^m-|\nabla u|^{p-2}\nabla u)(\nabla u^m-\nabla u )dxdt,\] obviously, it follows from \eqref{4.12}, \eqref{4.18}, \eqref{4.17}, \eqref{4.8} and \eqref{4.14}  that 
\begin{equation*}
\begin{split}
0&\le \limsup_{m\to \infty}X_m=\limsup_{m\to \infty}\int_0^{\widetilde{T}}\int_\Omega\Big[|\nabla u^m|^p-|\nabla u^m|^{p-2}\nabla u^m\nabla u-|\nabla u|^{p-2}\nabla u(\nabla u^m-\nabla u)\Big]dxdt\\
&\le \int_0^{\widetilde{T}}\int_\Omega|u|^q\ln|u|dxdt-\frac12\int_\Omega|x|^{-s}(u(x,T))^2dx+\frac12\int_\Omega|x|^{-s}(u(x,0))^2dx\\
&\quad- \int_0^{\widetilde{T}}\int_\Omega\xi\nabla u dxdt-\int_0^{\widetilde{T}}\int_\Omega|\nabla u|^{p-2}\nabla u(\nabla u-\nabla u)dxdt=0,
\end{split}
\end{equation*}
which implies $\lim_{m\to \infty}X_m=0.$ Since for $p\ge 2$,
\[\int_0^{\widetilde{T}}\int_\Omega|\nabla u^m-\nabla u|^pdxdt\le 2^{p-2}X_m,\]
one has that $\nabla u^m\to \nabla u$ strongly in $(L^p(0,\widetilde{T};L^p(\Omega)))^N,$ which implies $\nabla u^m\to \nabla u$ a.e. in $\Omega\times (0,\widetilde{T})$. Consequently  $|\nabla u^m|^{p-2}\nabla u^m\to |\nabla u|^{p-2}\nabla u$ a.e. in $\Omega\times (0,\widetilde{T})$, which together with the third one in \eqref{4.8} yields $\xi=|\nabla u|^{p-2}\nabla u$. Therefore, it follows from \eqref{4.13}, for any $\omega\in C^1([0,\widetilde{T}],W_0^{1,p}(\Omega))$, that 
\begin{equation*}
\int_0^{\widetilde{T}}(|x|^{-s}u_t,\omega)dt+\int_0^{\widetilde{T}}(|\nabla u|^{p-2}\nabla u,\nabla\omega)dt=\int_0^{\widetilde{T}}(|u|^{q-2}u\ln|u|,\omega)dt.
\end{equation*}
By the arbitrariness of $\widetilde{T} > 0$, we know that 
\begin{equation*}
(|x|^{-s}u_t,\phi)+(|\nabla u|^{p-2}\nabla u,\nabla\phi)=(|u|^{q-2}u\ln|u|,\phi)\quad \text{for any }\phi\in W_0^{1,p}(\Omega), \text{ a.e. }t>0.
\end{equation*}

In order to prove \eqref{19add2}, we first assume that $u(x, t)$ is smooth enough such that
$u_t \in L^2(0, \widetilde{T}; W_0^{1,p}(\Omega))$. Let us choose
$\phi= u_t $ as a test function and integrate \eqref{19add1} over $[0, t]$, obviously, \eqref{19add2} is true. Making use of  the density of $L^2(0, \widetilde{T}; W_0^{1,p}(\Omega))$ in $L^2(0, \widetilde{T}; L^2(\Omega))$, \eqref{19add2} also holds  for weak solutions of \eqref{1.1}. Therefore, $u$ is a global weak solution of problem \eqref{1.1}.

\textsc{Step 2. Decay rate}  Obviously, $u\in\mathcal{W}$ for $t\in [0,\infty)$ from \textsc{Step 1}, further, $I(u)\ge 0.$ It follows from \eqref{19add2} and \eqref{19add5} that 
\begin{equation*}
J(u_0)\ge J(u(x,t))\ge \frac{q-p}{pq}\|\nabla u\|_p^p,
\end{equation*}
which yields, together with the embedding $W_0^{1,p}(\Omega)\hookrightarrow  L^{p+\alpha}(\Omega)$ with $\alpha$ defined in \eqref{20e},
\[\|u\|_{q+\alpha}\le B_\alpha\|\nabla u\|_p\le B_\alpha\Big(\frac{pq}{q-p}J(u_0)\Big)^{\frac1p}.\]
Consequently, 
\begin{equation}
\label{21a}
\|u\|_{q+\alpha}^{q+\alpha}=\|u\|_{q+\alpha}^{p}\|u\|_{q+\alpha}^{q+\alpha-p}\le B_\alpha^{q+\alpha}\|\nabla u\|_p^p \Big(\frac{pq}{q-p}J(u_0)\Big)^{\frac{q+\alpha-p}{p}}.
\end{equation}
Combining  \eqref{20add1} and \eqref{21a}, and recalling \eqref{20f}, one has
\begin{equation*}
\begin{split}
\frac{d}{dt}\Big\||x|^{-\frac s2}u\Big\|_2^2&=-2I(u)\le -2\Big(\|\nabla u\|_p^p-\frac{1}{\alpha}\|u\|_{q+\alpha}^{q+\alpha}\Big)\\
&\quad \le -2\|\nabla u\|_p^p\Big[1-\frac{B_\alpha^{q+\alpha}}{\alpha}\Big(\frac{pq}{q-p}J(u_0)\Big)^{\frac{q+\alpha-p}{p}}\Big]\\
&\quad =-2\|\nabla u\|_p^p\Big[1-\Big(\frac{pq}{q-p}\frac{1}{(r(\alpha))^p}J(u_0)\Big)^{\frac{q+\alpha-p}{p}}\Big].
\end{split}
\end{equation*}
Replacing $u^m$ by $u$ in \eqref{4.15} and \eqref{4.16}, we get 
\[\Big\||x|^{-\frac s2}u\Big\|_2^2\le \widetilde{C}\|\nabla u\|_p^2.\]
Thus,
\begin{equation*}
\begin{split}
\frac{d}{dt}\Big\||x|^{-\frac s2}u\Big\|_2^2\le -\frac{2}{\widetilde{C}^{\frac p2}}\Big(\Big\||x|^{-\frac s2}u\Big\|_2^2\Big)^{\frac p2}\Big[1-\Big(\frac{2q}{q-2}\frac{1}{(r(\alpha))^2}J(u_0)\Big)^{\frac{q+\alpha-2}{2}}\Big],
\end{split}
\end{equation*}
which implies
\begin{equation*}
\Big\||x|^{-\frac s2}u\Big\|_2^2\le
\begin{cases}
 \Big\||x|^{-\frac s2}u_0\Big\|_2^2e^{-\frac{2}{\widetilde{C}}\Big[1-\Big(\frac{2q}{q-2}\frac{1}{(r(\alpha))^2}J(u_0)\Big)^{\frac{q+\alpha-2}{2}}\Big]t}&\text{for }p=2,\\
\Big\{\Big(\frac p2-1\Big)\frac{2}{\widetilde{C}^{\frac p2}}\Big[1-\Big(\frac{pq}{q-p}\frac{1}{(r(\alpha))^p}J(u_0)\Big)^{\frac{q+\alpha-p}{p}}\Big]t+\Big(\Big\||x|^{-\frac s2}u_0\Big\|_2^2\Big)^{1-\frac p2}\Big\}^{\frac{2}{2-p}}&\text{for }p>2.
 \end{cases}
\end{equation*}
   \end{proof1}
 
 \begin{proof2} 
 For this proof, we need to use the technique introduced by Philippin and Proytcheva in \cite{PP2006}, and further developed by Han et al. \cite{HGSL2018}. Let $K(t):=-J(u(x,t))$, then $L(0)>0$, $K(0)>0.$ It follows from \eqref{19add2} that 
\[K'(t)=-\frac{d}{dt}J(u(x,t))=\Big\||x|^{-\frac s2}u_t\Big\|_2^2\ge 0,\]
which yields $K(t)\ge K(0)>0$ for $t\in [0,T).$ Recalling Lemma \ref{20lemadd1} and \eqref{19add5}, one has
\begin{equation}
\label{5.2}
L'(t)=-I(u(x,t))=\frac {q-p}{p}\|\nabla u\|_p^p+\frac {1}{q}\|u\|_q^q-qJ(u(x,t))\ge qK(t).
\end{equation}
Combining Schwarz's inequality and \eqref{5.2} gets
\[L(t)K'(t)=\frac 12\Big\||x|^{-\frac s2}u\Big\|_2^2\Big\||x|^{-\frac s2}u_t\Big\|_2^2\ge \frac 12(L'(t))^2\ge \frac q2L'(t)K(t),\]
which implies
\[\Big(K(t)L^{-\frac q2}(t)\Big)'=L^{-\frac {q+2}{2}}(t)\Big[K'(t)L(t)-\frac q2 K(t)L'(t)\Big]\ge 0.\]
Therefore, together with \eqref{5.2}, we have 
\begin{equation}
\label{5.3}
0<\kappa:=K(0)L^{-\frac q2}(0)\le K(t)L^{-\frac q2}(t)\le \frac 1qL'(t)L^{-\frac q2}(t)=\frac{2}{(2-q)q}\Big(L^{\frac {2-q}{2}}(t)\Big)'.
\end{equation}
Integrating the inequality above over $[0,t]$ for $t\in (0,T)$, and taking $q>p\ge 2$ in mind, one has
\[0\le L^{\frac {2-q}{2}}(t)\le L^{\frac {2-q}{2}}(0)-\frac{(q-2)q}{2}\kappa t\quad \text{for }t\in(0,T).\]
Obviously, the inequality above cannot hold for all $t>0$, i.e. $T<\infty$. Moreover, 
\[T\le \frac{2}{(q-2)q\kappa}L^{\frac {2-q}{2}}(0)= \frac{2L(0)}{(2-q)qJ(u_0)}.\]
 \end{proof2}
 
 \begin{proofaddthm2}
In order to prove this theorem, let us borrow some ideas from \cite{SLW2018}. From Lemma \ref{20lemadd2}(2)(3), let us conclude that  there exists a $t_0\in [0, T )$  such that $J(u(x,t))< M$ and $I(u(x,t))<0$ for all $t\in [t_0, T )$ provided that $J(u_0) \le M$ and $I(u_0) < 0$.  Therefore, it follows from Lemma \ref{1lem20}(2)  and \eqref{20g} that 
 \begin{equation}
\label{5.4}
\|\nabla u\|_p^p\ge r_*^p=\frac{pq}{q-p}M \quad\text{for }t\in [t_0, T ).
\end{equation}

 Next, we prove $T<\infty$.
 For any $T^*\in(0,T)$, $\gamma>0$ and $\sigma>0$, define an auxiliary function 
 \[M(t)=\int_{t_0}^tL(\tau)d\tau+(T-t)L(t_0)+\frac{\gamma}{2}(t+\sigma)^2\quad\text{for }t\in[t_0,T^*].\]
 By a direct computation, one has
 \begin{equation*}
\begin{split}
M'(t)=L(t)-L(t_0)+\gamma(t+\sigma)=\int_{t_0}^t\int_\Omega |x|^{-s}uu_\tau dxd\tau+\gamma(t+\sigma)\quad\text{for }t\in[t_0,T^*].
\end{split}
\end{equation*}
Further, recall \eqref{20add1}, \eqref{19add5} and \eqref{19add2}, then
 \begin{equation*}
\begin{split}
M''(t)&=\int_\Omega |x|^{-s}uu_t dx+\gamma=-I(u(x,t))+\gamma\\
&=\frac {q-p}{p}\|\nabla u\|_p^p+\frac {1}{q}\|u\|_q^q-qJ(u(x,t))+\gamma\\
&
\ge \frac {q-p}{p}\|\nabla u\|_p^p -q\Big[J(u(x,t_0))-\int_{t_0}^t\Big\||x|^{-\frac s2}u_\tau\Big\|_2^2d\tau\Big]+\gamma\quad \text{for }t\in[t_0,T^*].
\end{split}
\end{equation*}
Applying Cauchy-Schwarz inequality, one has 
 \begin{equation*}
 \begin{split}
\xi(t):&=\Big[\int_{t_0}^t\Big\||x|^{-\frac s2}u\Big\|_2^2d\tau+\gamma(t+\sigma)^2\Big]\Big[\int_{t_0}^t\Big\||x|^{-\frac s2}u_\tau\Big\|_2^2 d\tau+\gamma\Big]\\
&\quad-\Big[\int_{t_0}^t\int_\Omega |x|^{-s}uu_\tau dxd\tau+\gamma(t+\sigma)\Big]^2\ge 0\quad\text{for }t\in[{t_0},T^*].
\end{split}
\end{equation*}
 Therefore, recalling \eqref{5.4}, 
 \begin{equation}
\label{5.5}
\begin{split}
&M(t)M''(t)-\frac{q}{2}(M'(t))^2\\
&\quad\ge M(t)M''(t)+\frac{q}{2}\Big[\xi(t)-\Big(2M(t)-2(T-t)L(t_0)\Big)\Big(\int_{t_0}^t\Big\||x|^{-\frac s2}u_\tau\Big\|_2^2d\tau+\gamma\Big)\Big]\\
&\quad \ge M(t)\Big[M''(t)-q\Big(\int_{t_0}^t\Big\||x|^{-\frac s2}u_\tau\Big\|_2^2d\tau+\gamma\Big)\Big]\\
&\quad\ge M(t)\Big[\frac {q-p}{p}\|\nabla u\|_p^p-qJ(u(x,t_0))-(q-1)\gamma\Big]\\
&\quad \ge M(t)\Big[qM-qJ(u(x,t_0))-(q-1)\gamma\Big]\ge 0
\end{split}
\end{equation}
for $t\in[{t_0},T^*]$ and $\gamma\in \Big(0,\frac{q(M-J(u(x,t_0)))}{q-1}\Big]$. It follows from Lemma \ref{lem2.4} that 
\[0<T^*-t_0\le \frac{2M(t_0)}{(q-2)M'(t_0)}=\frac{2(T-t_0)L(t_0)}{(q-2)\gamma(t_0+\sigma)}+\frac{t_0+\sigma}{q-2}.\]
Since the arbitrariness of $T^*<T$,  for any $\gamma\in \Big(0,\frac{q(M-J(u(x,t_0)))}{q-1}\Big]$ and $\sigma>0$, one has
\begin{equation}
\label{5.6}
T-t_0\le \frac{2(T-t_0)L(t_0)}{(q-2)\gamma(t_0+\sigma)}+\frac{t_0+\sigma}{q-2}.
\end{equation}

Fix now $\gamma_0\in\Big(0,\frac{q(M-J(u(x,t_0)))}{q-1}\Big]$, then for any  $\sigma\in\Big (\frac{2L(t_0)}{(q-2)\gamma_0}-t_0,+\infty\Big)$, $0<\frac{2L(t_0)}{(q-2)\gamma_0(t_0+\sigma_0)}<1$ holds, which implies together with \eqref{5.6}
\[T\le\frac{\gamma_0(t_0+\sigma)^2}{(q-2)\gamma_0(t_0+\sigma)-2L(t_0)}+t_0.\]
Minimizing the right-hand side of the inequality above for $\sigma\in\Big (\frac{2L(t_0)}{(q-2)\gamma_0}-t_0,+\infty\Big)$, one gets 
\begin{equation}
\label{5.7}
T\le \frac{8L(t_0)}{(q-2)^2\gamma_0}+t_0\quad \text{for }\gamma_0\in\Big(0,\frac{q(M-J(u(x,t_0)))}{q-1}\Big].
\end{equation}
Minimizing the right-hand side of the inequality above for $\gamma_0\in\Big(0,\frac{q(M-J(u(x,t_0)))}{q-1}\Big]$, we obtain 
\[T\le \frac{8(q-1)L(t_0)}{(q-2)^2q(M-J(u(x,t_0)))}+t_0.\]
For $J(u_0)<M$, taking $t_0=0$, then 
\[T\le \frac{8(q-1)L(0)}{(q-2)^2q(M-J(u_0))}.\]
  \end{proofaddthm2}

 \begin{proof3}
 This proof follows the part ideas in \cite{H2018}. Firstly, we prove that $u$ blows up in finite time.  Suppose on the contrary that $u$ is global, i.e. $T=+\infty$.   Then, recalling \eqref{5.1}, for all $t\in [0,\infty)$, Schwarz's inequality and \eqref{19add2} imply 
 \begin{equation}
\label{6.1}
\begin{split}
(2L(t))^{\frac12}&=\Big\||x|^{-\frac s2}u\Big\|_2=\Big\|\int_0^t|x|^{-\frac s2}u_\tau d\tau+|x|^{-\frac s2}u_0\Big\|_2
\le\int_0^t\Big\||x|^{-\frac s2}u_\tau \Big\|_2d\tau+(2L(0))^{\frac12} \\
&\le t^{\frac12}\Big(\int_0^t\Big\||x|^{-\frac s2}u_\tau \Big\|_2^2d\tau\Big)^{\frac12}+(2L(0))^{\frac12}
\le t^{\frac12}(J(u_0)-J(u(x,t)))^{\frac12}+(2L(0))^{\frac12}\\
&\le t^{\frac12}J^{\frac12}(u_0)+(2L(0))^{\frac12},
\end{split}
\end{equation}
 where we apply $0\le J(u(x,t))\le J(u_0)$ if $u$ is a global solution. Here, we prove that $0\le J(u(t))\le J(u_0)$. Otherwise,
there exists $t_*\in [0, \infty)$ such that $J(t_*) < 0$. Then by Theorem \ref{thm2}, we know that $u$ blows up in finite time, which is a contradiction. Replacing $u^m$ by $u$ in \eqref{4.15} and \eqref{4.16}, one has
\begin{equation}
\label{6.2}
2L(t)=\Big\||x|^{-\frac s2}u\Big\|_2^2\le \widetilde{C}\|\nabla u\|_p^p+\widetilde{C}.
\end{equation}
 It follows from  \eqref{5.2} and \eqref{6.2} that 
 \begin{equation}
\label{6.3}
\begin{split}
L'(t)=&-I(u(x,t))=\frac {q-p}{p}\|\nabla u\|_p^p+\frac {1}{q}\|u\|_q^q-qJ(u(x,t))\\
&\ge \frac {q-p}{p}\Big(\frac{2}{\widetilde{C}}L(t)-1\Big)-qJ(u(x,t))\\
&=\frac {q-p}{p}\frac{2}{\widetilde{C}}\Big(L(t)-C_1-C_2J(u(x,t)\Big),
\end{split}
\end{equation}
here $C_1=\frac{\widetilde{C}}{2}$ and  $C_2=\frac {pq}{q-p}\frac{\widetilde{C}}{2}$.
 Set 
\begin{equation}
\label{6.4}
F(t)=L(t) -C_1-C_2J(u(t)),
\end{equation}
then by  \eqref{6.3}
\begin{equation*}
F'(t)\ge L'(t)
\ge \frac {q-p}{p}\frac{2}{\widetilde{C}}F(t).
\end{equation*}
Since $F(0)=L(0)-C_1-C_2J(u_0)>0$ from \eqref{20add4}, we get 
\begin{equation}
\label{6.5}
F(t)\ge F(0)e^{\frac {q-p}{p}\frac{2}{\widetilde{C}}t}>0.
\end{equation}
Therefore, by \eqref{6.4}, one has
\[L(t)\ge F(t)\ge F(0)e^{\frac {q-p}{p}\frac{2}{\widetilde{C}}t},\]
which contradicts \eqref{6.1} for sufficiently large $t$. Thus, $u$ blows up in finite time. Moreover, \eqref{6.3} and \eqref{6.5} imply  that $L(t)$ is strictly increasing for $t\in [0,\infty)$.

Secondly, let us estimate the upper bound of $T$. For any $T^*\in(0,T)$, $\gamma>0$ and $\sigma>0$, define an auxiliary function 
 \[M(t)=\int_0^tL(\tau)d\tau+(T-t)L(0)+\frac{\gamma}{2}(t+\sigma)^2\quad\text{for }t\in[0,T^*].\]
 Similar  to \eqref{5.5},  and noticing that \eqref{6.2} and  $L(t)$ is strictly increasing for $t\in [0,\infty)$, we have
 \begin{equation}
\label{6.6}
\begin{split}
M(t)M''(t)-\frac{q}{2}(M'(t))^2&\ge M(t)\Big[\frac {q-p}{p}\|\nabla u\|_p^p-qJ(u_0)-(q-1)\gamma\Big]\\
&\ge M(t) \Big[\frac {q-p}{p}\Big(\frac{2}{\widetilde{C}}L(t)-1\Big)-qJ(u_0)-(q-1)\gamma\Big]\\
&\ge M(t) \Big[\frac {q-p}{p}\Big(\frac{2}{\widetilde{C}}L(0)-1\Big)-qJ(u_0)-(q-1)\gamma\Big]\\
&=M(t) \Big[\frac {q-p}{p}\frac{2}{\widetilde{C}}F(0)-(q-1)\gamma\Big]\ge 0
\end{split}
\end{equation}
for $t\in[0,T^*]$ and $\gamma\in \Big(0,\frac {q-p}{p}\frac{2}{\widetilde{C}}\frac{1}{q-1}F(0)\Big]$.  In the remaining proof,  using the same method in the proof of Theorem \ref{addthm2}, one has 
\[T\le \frac{4(q-1)p\widetilde{C}L(0)}{(q-2)^2(q-p)F(0)}.\]
 \end{proof3}

  \begin{proof4}
 From \eqref{6.3} and \eqref{6.5}, we directly get $I(u(x,t))<0$ for $t\in[0,T),$  which implies
 \begin{equation}
\label{6.7}
\|\nabla u\|_p^p<\int_\Omega |u|^q\ln|u|dx<\frac{1}{\alpha}\|u\|_{q+\alpha}^{q+\alpha},
\end{equation}
here $\alpha$ is defined in \eqref{20e}. 
Applying interpolation inequality, $W_0^{1,p}(\Omega)\hookrightarrow L^{\frac{Np}{N-p}}(\Omega)$ and $I(u(t))<0$ for $t\in[0,T)$, it follows that \begin{equation}
\label{6.8}
\begin{split}
\|u\|_{q+\alpha}^{q+\alpha}&\le \|u\|_{\frac{Np}{N-p}}^{\theta(q+\alpha)}\|u\|_{2}^{(1-\theta)(q+\alpha)}\le C_*^{\theta(q+\alpha)}\|\nabla u\|_{p}^{\theta(q+\alpha)}\|u\|_{2}^{(1-\theta)(q+\alpha)}\\
&<C_*^{\theta(q+\alpha)}\alpha^{-\frac{\theta(q+\alpha)}{p}}\Big(\|u\|_{q+\alpha}^{q+\alpha}\Big)^{\frac{\theta(q+\alpha)}{p}}\Big(\|u\|_{2}^{2}\Big)^{\frac{(1-\theta)(q+\alpha)}{2}}\\
&=C_*^{\theta(q+\alpha)}\alpha^{-\frac{\theta(q+\alpha)}{p}}[\text{diam}(\Omega)]^{\frac{s(1-\theta)(q+\alpha)}{2}}\Big(\|u\|_{q+\alpha}^{q+\alpha}\Big)^{\frac{\theta(q+\alpha)}{p}}\Big(2L(t)\Big)^{\frac{(1-\theta)(q+\alpha)}{2}}.
\end{split}
\end{equation}
 By recalling $\theta=\Big(\frac{1}{2}-\frac{1}{q+\alpha}\Big)\Big(\frac{1}{2}-\frac{N-p}{Np}\Big)^{-1}\in(0,1)$ and $q+\alpha<p\big(1+\frac 2N\big)$, one has $1-{\frac{\theta(q+\alpha)}{p}}>0$, and $\kappa=[\frac{(1-\theta)(q+\alpha)}{2}]/[1-\frac{\theta(q+\alpha)}{p}]>1.$ Therefore,
 \begin{equation}
\label{6.9}
\begin{split}
\frac{d}{dt}L(t)&=-I(u(t))\le \int_\Omega |u|^q\ln|u|dx<\frac{1}{\alpha}\|u\|_{q+\alpha}^{q+\alpha}\\
&<\frac 1\alpha\Big[C_*^{\theta(q+\alpha)}\alpha^{-\frac{\theta(q+\alpha)}{p}}[\text{diam}(\Omega)]^{\frac{s(1-\theta)(q+\alpha)}{2}}\Big]^{\frac{p}{p-\theta(q+\alpha)}}2^\kappa L^\kappa(t).
\end{split}
\end{equation}
 And $L(t)>0$ due to $I(u(t))<0$ for $t\in[0,T)$. Further, \eqref{6.9} yields 
 \[\frac{1}{1-\kappa}(L^{1-\kappa}(t)-L^{1-\kappa}(0))\le \frac 1\alpha\Big[C_*^{\theta(q+\alpha)}\alpha^{-\frac{\theta(q+\alpha)}{p}}[\text{diam}(\Omega)]^{\frac{s(1-\theta)(q+\alpha)}{2}}\Big]^{\frac{p}{p-\theta(q+\alpha)}}2^\kappa t.\]
 By Theorem $\ref{thm3}$, we get $\lim_{t\to T^-}L(t)dt=+\infty.$  As a result, by letting $t\to T$, we obtain 
  \begin{equation*}
\label{20add6}
T\ge \frac{L^{1-\kappa}(0)}{\alpha^{-1}\Big[C_*^{\theta(q+\alpha)}\alpha^{-\frac{\theta(q+\alpha)}{p}}[\text{diam}(\Omega)]^{\frac{s(1-\theta)(q+\alpha)}{2}}\Big]^{\frac{p}{p-\theta(q+\alpha)}}2^\kappa(\kappa-1)}.
\end{equation*}
  \end{proof4}

\subsection*{Acknowledgements}
The  first author would like  to express her sincere gratitude to Professor Wenjie Gao for his support and constant encouragement. This work is supported by the National Natural Science Foundation of China(No.  11926316, 11531010, 12071391).

\subsection*{Competing interests}
The authors declare that they have no competing interests.

\end{document}